\renewcommand{\operatorname}{\mathsf}
\newcommand{\E}{\mathbb{E}}
\renewcommand{\tilde}{\widetilde}
\newcommand{\R}{\mathbb{R}}
\renewcommand{\P}{\mathbb{P}}
\newcommand{\eps}{\varepsilon}
\newcommand{\one}{\mathds{1}}
\newtheorem{lemma}{Lemma}
\newtheorem{condition}{Condition}
\newcommand{\bmid}{\,\middle\vert}
\newtheorem{theorem}{Theorem}
\newtheorem{proposition}{Proposition}
\newcommand{\I}{\mathbb{I}}
\renewcommand{\d}{\mathrm{d}}
\renewcommand{\det}{\operatorname{det}}
\newcommand{\fr}{\operatorname{Fr}}
\newcommand{\lr}[1]{\left(#1\right)}
\newcommand{\abs}[1]{\left|#1\right|}
\crefname{condition}{condition}{conditions}
\Crefname{condition}{Condition}{Conditions}
\newcommand{\gives}{\rightarrow}
\newcommand{\norm}[1]{\left|\left| #1 \right|\right|}
\newcommand{\normm}[1]{\left|\left| #1 \right|\right|_{(k)}}
\title{Global Universality of Singular Values in Products\\of Many Large Random Matrices}
\date{}
\author{
Boris Hanin\footnote{
Princeton University. Email: bhanin@princeton.edu
}
,
Tianze Jiang\footnote{
Princeton University. Email: tzjiang@princeton.edu
}
}
\begin{document}
\maketitle
\begin{abstract}
    We study the singular values (and Lyapunov exponents) for products of $N$ independent $n\times n$ random matrices with i.i.d. entries. Such matrix products have been extensively analyzed using free probability, which applies when $n\gives \infty$ at fixed $N$, and the multiplicative ergodic theorem, which holds when $N\gives \infty$ while $n$ remains fixed. The regime when $N,n\gives \infty$ simultaneously is considerably less well understood, and our work is the first to prove universality for the global distribution of singular values in this setting. Our main result gives non-asymptotic upper bounds on the Kolmogorov-Smirnoff distance between the empirical measure of (normalized) squared singular values and the uniform measure on $[0, 1]$ that go to zero when $n, N\gives \infty$ at any relative rate. We assume only that the distribution of matrix entries has zero mean, unit variance, bounded fourth moment, and a bounded density. Our proofs rely on two key ingredients. The first is a novel small-ball estimate on singular vectors of random matrices from which we deduce a non-asymptotic variant of the multiplicative ergodic theorem that holds for growing matrix size $n$. The second is a martingale concentration argument, which shows that while Lyapunov exponents at large $N$ are not universal at fixed matrix size, their empirical distribution becomes universal as soon as the matrix size grows with $N$. 
   
\end{abstract}
\section{Introduction}
This article concerns the distribution of singular values for products of independent random matrices 
\begin{equation}\label{eqn:prod_main_def}
X_{N, n} \triangleq W_N \cdots W_1,\qquad W_i\in\R^{n\times n},
\end{equation}
with entries of $\sqrt{n} W_i$ drawn i.i.d. from a fixed  distribution $\mu$. We assume $\mu$ satisfies the following
\begin{condition}\label{cond:main_mu}
The probability measure $\mu$ has zero mean, unit variance, a finite fourth moment $M_4 <\infty$, and a density with respect to the Lebesgue measure bounded above by $K_\infty<\infty$. 
\end{condition}
\noindent Our main result, \Cref{thm:sing_val_approx_unif}, is a quantitative universality result for 
\[
\rho_{N,n} \triangleq \frac{1}{n}\sum_{i=1}^n \delta_{s_i(X_{n,N})^{2/N}},
\]
the empirical distribution of rescaled singular values $s_{1}(X_{N,n})\geq \cdots \geq s_n(X_{N,n})$ of $X_{N,n}$. 
\begin{theorem}\label{thm:sing_val_approx_unif}
    Under \Cref{cond:main_mu}, there exist constants $c_1, c_2, c_3, c_4>0$ depending on $K_\infty, M_4$ with the following property. For all $\varepsilon \in\left(0, 1/2\right)$, if $N>c_1 \eps^{-2}$ and $n>c_2 \eps^{-2} \cdot \log(1/\eps)$, then 
\begin{equation}\label{eqn: thm 1 main}
\mathbb{P}\left(d_{KS}(\rho_{N,n},\, \mathrm{U}_{[0,1]})>\eps\right)\leq  c_3\exp\{-c_4nN\eps^2/\log n\},
\end{equation}
where $d_{KS}$ is the Kolmogorov-Smirnoff distance and $\mathrm{U}_{[0,1]}$ is the uniform distribution on $[0,1]$.
\end{theorem}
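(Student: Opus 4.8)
\emph{Reduction to partial log-volumes.} The plan is to pass from the singular values of $X_{N,n}$ to a stochastic recursion over the $N$ factors, identify the (universal) limiting empirical behaviour of its drift, and control the fluctuations by martingale concentration. For $1\le k\le n$ set $\Lambda_k\triangleq\frac1N\log\|\wedge^k X_{N,n}\|^2$, the finite-$N$ sum of the top $k$ Lyapunov exponents; then $\frac2N\log s_k(X_{N,n})=\Lambda_k-\Lambda_{k-1}$, and an elementary order-statistics estimate (using that the $s_k(X_{N,n})^{2/N}$ are ordered) shows $d_{KS}(\rho_{N,n},\mathrm{U}_{[0,1]})\le\eps$ whenever $\max_{k}\bigl|e^{\Lambda_k-\Lambda_{k-1}}-\tfrac{n-k+1}{n}\bigr|\le\eps/2$ and $n\ge 2/\eps$; more generally $d_{KS}$ is controlled by the empirical distribution of the increments $\{\Lambda_k-\Lambda_{k-1}\}_{k=1}^n$. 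Since $\wedge^k X_{N,n}=\prod_{j=1}^N\wedge^k W_j$, telescoping gives $\Lambda_k=\frac1N\sum_{j=1}^N\Delta_{k,j}$ with $\Delta_{k,j}\triangleq\log\|\wedge^k X_{j,n}\|^2-\log\|\wedge^k X_{j-1,n}\|^2$, so everything reduces to the conditional law of $\Delta_{k,j}$ given $\F_{j-1}\triangleq\sigma(W_1,\dots,W_{j-1})$.

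\emph{The non-asymptotic multiplicative ergodic theorem.} The structural input is that $\Delta_{k,j}$ equals, up to an ``alignment'' correction that becomes negligible after averaging in $j$, the log volume expansion $\log J_k(W_j,U_{j-1})^2$ that $W_j$ applies to $U_{j-1}$, the top-$k$ left-singular subspace of $X_{j-1,n}$, where $J_k(W,U)^2=\det(\langle Wu_a,Wu_b\rangle)_{a,b\le k}$ for any orthonormal basis $(u_a)$ of $U$ (the inequality $\Delta_{k,j}\ge\log J_k(W_j,U_{j-1})^2$ is immediate; the matching upper bound is the ergodic-theoretic content). Conditionally on $\F_{j-1}$ the matrix $W_j$ is independent of $U_{j-1}$, so $\log J_k(W_j,U_{j-1})^2$ has, conditionally, the law of $\log J_k(W,U)^2$ for a fixed subspace $U$; and here the paper's small-ball estimate on singular vectors enters: it forces $U_{j-1}$ to be delocalised with overwhelming probability (its basis vectors having $\ell^\infty$ norm $\lesssim\sqrt{\log n/n}$), and for such $U$ a multivariate Lindeberg/CLT argument shows the Gram matrix $M=(\langle Wu_a,Wu_b\rangle)_{a,b}$ is close in law to $\frac1n G^*G$ with $G$ an $n\times k$ standard Gaussian, while \Cref{cond:main_mu}'s bounded density supplies a Rudelson--Vershynin-type lower bound on $\lambda_{\min}(M)$ keeping $\E[\log\det M\mid\F_{j-1}]$ within a $\mu$-independent error (of size $O(\sqrt{\log n/n}\,)$ after summing the $k\le n$ directions) of its Gaussian value. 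This is the growing-$n$, non-asymptotic substitute for the multiplicative ergodic theorem: the conditional drift of $\Delta_{k,j}$ is universal up to controlled errors.

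\emph{The Gaussian drift and the mean estimate.} For Gaussian entries the Bartlett decomposition gives $\det(\frac1n G^*G)\overset{d}{=}n^{-k}\prod_{i=0}^{k-1}\chi^2_{n-i}$, so the Gaussian per-step drift is $g_k(n)=-k\log n+\sum_{i=0}^{k-1}\bigl(\psi(\tfrac{n-i}{2})+\log2\bigr)=\sum_{i=0}^{k-1}\log\tfrac{n-i}{n}+O\bigl(\sum_{i=0}^{k-1}\tfrac1{n-i}\bigr)$, whence $e^{g_k(n)-g_{k-1}(n)}=\tfrac{n-k+1}{n}\bigl(1+O(\tfrac1{n-k+1})\bigr)$; telescoping and summing the error terms, the empirical distribution of $\{e^{g_k(n)-g_{k-1}(n)}\}_{k=1}^n$ lies within $O(1/n)$ of $\mathrm{U}_{[0,1]}$ in $d_{KS}$. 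Combined with the previous paragraph, $\E[e^{\Lambda_k-\Lambda_{k-1}}]$ is within $O(1/n)+O(\sqrt{\log n/n}\,)$ of $\tfrac{n-k+1}{n}$ uniformly in $k$; together with the finite-$N$ error from replacing $\Lambda_k$ by its conditional mean (of order $\sqrt{\log n/N}$, absorbed once $N\gtrsim\eps^{-2}$) this yields $\E\,d_{KS}(\rho_{N,n},\mathrm{U}_{[0,1]})\le\eps/2$ as soon as $N>c_1\eps^{-2}$ and $n>c_2\eps^{-2}\log(1/\eps)$.

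\emph{Fluctuations, and the main obstacle.} Finally one shows $d_{KS}(\rho_{N,n},\mathrm{U}_{[0,1]})$ concentrates around its mean. On the overwhelming-probability event of the second paragraph, the per-step increment $\Delta_{k,j}-\Delta_{k-1,j}$ is, up to lower-order terms, $\log\chi^2_{n-k+1}-\log n$, hence has conditional variance $O(1/(n-k+1))$ and sub-exponential tails; therefore $\Lambda_k-\Lambda_{k-1}$, a normalised martingale sum in $j$, concentrates at scale $\asymp 1/\sqrt{(n-k+1)N}$ with Freedman/Bernstein tails $\exp(-c(n-k+1)N t^2)$ in the Gaussian range and exponential tails of rate $(n-k+1)N$ beyond it, so that each deviation $\bigl|e^{\Lambda_k-\Lambda_{k-1}}-\tfrac{n-k+1}{n}\bigr|>\eps$ has probability $\le\exp(-cnN\eps^2)$; a union/chaining bound over the $n$ indices $k$ and over thresholds, which costs only $\log n$ in the exponent, then gives $\P(d_{KS}(\rho_{N,n},\mathrm{U}_{[0,1]})>\eps)\le c_3\exp(-c_4nN\eps^2/\log n)$. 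The factor $n$ in the exponent is thus the matrix dimension setting the per-step fluctuation scale $1/\sqrt n$. The main obstacle throughout is the small-ball estimate on singular vectors and the attendant control of the alignment corrections: one must prove delocalisation of the top singular subspaces of all partial products $X_{j,n}$ with only polylogarithmic loss, uniformly over $k\le n$ and $j\le N$, despite the singular-value gaps $s_k/s_{k+1}\approx e^{1/(n-k)}$ being extremely small — this is exactly where the bounded-density hypothesis is essential and where the argument departs from both the fixed-$n$ multiplicative ergodic theorem and the fixed-$N$ free-probability picture.
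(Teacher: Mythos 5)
Your proposal has two genuine gaps, and they are linked. First, your reduction asks for control of the \emph{individual} increments $\Lambda_k-\Lambda_{k-1}$, i.e.\ of each rescaled singular value $s_k(X_{N,n})^{2/N}$, to accuracy $\eps$. The methods available here (and the ones you invoke) cannot deliver that: the per-step quantity $\log\|W U\|_{(k)}^2$ has a $\mu$-dependent mean which is only pinned down through its exact second moment $\E\|WU\|_{(k)}^2=\prod_{j\le k}\frac{n-j+1}{n}$ plus a one-sided small-ball bound, and this yields concentration of $\frac1N\log\|X_{N,n}U_0\|_{(k)}$ around $\sum_{j\le k}\log\frac{n-j+1}{n}$ only up to an additive error of order $\log(ek)\vee n\eps^2$ that does not vanish as $N\to\infty$ (this is exactly the floor $s\gtrsim \log(ek)/n$ in \eqref{eqn: concentrate single matt} and the $\Omega(1/n)$ non-universality discussed in \Cref{sec: limitations}). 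Differencing two such partial sums therefore controls $\Lambda_k-\Lambda_{k-1}$ only to additive $O(\log k)$, which is useless for locating a single singular value. The paper never controls individual exponents: it controls all partial sums $\sum_{i\le k}\lambda_i$ to accuracy $n\eps^2$ (\eqref{thm: main technical concentration}) and then converts this into a bound on the counting function by a monotonicity argument using \Cref{lem: partial sums log calculus} — if many singular values were misplaced, the partial sums would deviate quadratically in the count. That counting step is not ``elementary order statistics'' around your max-increment condition; it is the device that makes the weaker, achievable partial-sum control sufficient. Your Freedman-type tail $\exp(-c(n-k+1)Nt^2)$ for each increment also silently assumes the conditional drift is the Gaussian one, which is precisely what is not known.

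Second, your structural input — that the top-$k$ left singular subspaces $U_{j-1}$ of every partial product $X_{j-1,n}$ are delocalised (basis vectors with $\ell^\infty$ norm $\lesssim\sqrt{\log n/n}$), so that a Lindeberg comparison makes the conditional Gram matrix close to Wishart — is asserted, not proved, and is not what the paper's small-ball estimate says. The paper explicitly states it does not know the distribution of the top singular vectors of $X_{N,n}$ even for $k=1$, and its argument is engineered to avoid this: \Cref{prop:sup_eq_any} compares $\|X_{N,n}U_0\|_{(k)}$ to $\|X_{N,n}\|_{(k)}$ \emph{once}, via sub-multiplicativity through the single factor $W_1$ and the inverse-moment/small-ball bound of \Cref{lem:sq_det_small} for $\det(L^TW_1U_0)$ with $L$ fixed; and \Cref{prop: ptwise any} needs no delocalisation at all because $\E\|W U\|_{(k)}^2$ is frame-independent and distribution-free given two moments. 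Proving uniform-in-$j$ delocalisation of singular subspaces of the partial products, with the singular-value gaps $s_k/s_{k+1}\approx e^{N/(n-k)}$ behaving as they do, is an open problem-level task, not a step you can borrow from the paper. Finally, a small but telling point: the $\log n$ in the exponent of \eqref{eqn: thm 1 main} does not come from a union bound over $k$ (that costs a prefactor $n$, absorbable), but from the sub-exponential scale $\log(ek)/n$ of the per-step variables in the martingale concentration (\Cref{lem: mtg concentrate}).
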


\Cref{thm:sing_val_approx_unif} is the first universality result for $\rho_{N,n}$ which holds for a general class of distributions $\mu$ \emph{regardless} of the relative size of $n,N$. It guarantees that as soon as $\mu$ matches the first two moments of a standard Gaussian (and has bounded fourth moment and $L_\infty$ norm), the empirical singular value distribution is close to the case when $\mu$ is a (real) standard Gaussian (see Theorem 1.2 in \cite{hanin2021non}). Our result also extends to complex distributions with independent real and imaginary parts (see \Cref{sec: limitations}). The nature of the universality underlying \Cref{thm:sing_val_approx_unif}, however, is unusual in the following two senses:
\begin{itemize}
    \item \textbf{Requirement of bounded density.} Universality is a hallmark of random matrix theory in the regime where the matrix size $n$ tends to infinity. It is uncommon in such universality results to include the hypothesis that $\mu$ has a bounded density. But such an assumption is essential in our setting because we are interested in the setting of growing $N$. For example, the matrix product $X_{N,n}$ has rank one with positive probability if $\mu$ contains an atom and $N$ is exponential in $n^2$. In the ergodic limit of fixed $n$ and diverging $N$, moreover, the limiting empirical measure $\rho_{n,\infty}$ of singular values is known to be \textit{non-universal} and depends on $\mu$, see e.g. \cite{kargin2014largest, akemann2014universal,avila2023continuity}.  This is in contrast the free probability limit where it is typical to consider polynomials of fixed degree (e.g. fixed $N$) evaluated at a collection of $n\times n$ random matrices with $n\gives \infty$ (see e.g. \cite{gotze2010asymptotic, o2010products, gotze2021rate}).
    \item \textbf{One global shape, many local shapes.} For many classical random matrix ensembles,  universality holds not just for the global distribution of eigenvalues or singular values but persists also at the microscopic scale where consecutive eigenvalues or singular values remains order $1$ apart as $n\gives \infty$. In our setting, however, even in the simplest case when $\mu$ is a standard complex Gaussian, the \textit{local} distribution depends on the limiting value of $n/N$ \cite{akemann2014universal,liu2018lyapunov}. The relative size of $n,N$ therefore determines the local statistics but does not impact the global properties of $\rho_{N,n}$. It remains open both to determine at what scale the local distribution of singular values begins to depend on $N/n$ and whether the local limits, derived using methods from integrable systems, are universal (see \cite{hanin2020products} for some partial progress). 
\end{itemize}

While the effect of simultaneously large $n,N$ on the statistics $\rho_{n,N}$ is still far from understood, prior work showed that the \textit{global} distribution of singular values $\rho_{n,N}$ converges to $\mathrm{U}_{[0,1]}$ if one either first takes $n\gives\infty$ and then $N\gives \infty$ or vice versa \cite{isopi1992triangle, kargin2008lyapunov}. These articles use very different tools: the work \cite{kargin2008lyapunov}, which first takes $n\gives \infty$, relies on free probability while \cite{isopi1992triangle} uses the multiplicative ergodic theorem to analyze what happens when one first takes $N\gives \infty$.

Neither free probability nor ergodic techniques are simple to make effective when both $n$ and $N$ are large but finite. To make progress in this direction the article \cite{hanin2021non} used small ball probabilities to quantify, at finite $N$, the rate of the convergence in the multiplicative ergodic theorem and obtain a sharper version of \Cref{thm:sing_val_approx_unif} in the special case when $\mu$ is the standard (real) Gaussian (see \Cref{sec: limitations} for a discussion of our optimality). We take a similar approach. The core difference is that the distribution of the individual $W_i$ matrices is no longer isotropic (invariant under left or right rotations). As we explain in \Cref{sec: proof short}, this means we must obtain new small ball probabilities for the inner product between a fixed $k$-frame in $\R^n$ and the projection onto the span of the top $k$ singular vectors for $X_{N,n}$.

\subsubsection*{Outline of Remainder of the Article.} The rest of this article is organized as follows. First, in \Cref{sec: related works}, we give a more thorough review of the relation between our results and prior work. Then in \Cref{sec: proof short} we state the main results needed to prove \Cref{thm:sing_val_approx_unif}. We make some further remarks on the results as well as future directions in \Cref{sec: limitations}. The remaining proofs of these results are provided in \Cref{sec: remainder}, after a brief recall of auxiliary technical results needed in \Cref{sec: preliminaries}.

\section{Related works}\label{sec: related works}
Products of random matrices are a vast subject. We provide here some representative references, focusing mainly on work in which the number of matrices, $N$, is large or growing.

The setting where the matrix size $n$ is fixed while the the number of terms in the matrix product $N$ grows has attracted much interest starting from the seminal work of Furstenberg \cite{furstenberg1960products} and later of Oseledec \cite{oseledec1968multiplicative} on the multiplicative ergodic theorem. Particularly relevant to the present article are the works of Newman \cite{newman1986distribution} and Isopsi-Newman \cite{isopi1992triangle}. Since then, the study of Lyapunov exponents of random matrix products has found applications to the study of random Sch\"odinger operators \cite{bougerol2012products}, number theory and dynamics  \cite{kontsevich1997lyapunov, masur2002rational}, and beyond \cite{wilkinson2017lyapunov, arnold1995random}.

Matrix products when $n\gives \infty$ while $N$ is potentially large but fixed have also been extensively studied. For instance, classical results in free probability concern the spectrum of products of a fixed number of (freely) independent matrices \cite{mingo2017free, voiculescu1992free, nica2006lectures}. In this vein, the articles \cite{tucci2010limits, kargin2008lyapunov} both use tools from free probability to obtain the analog of \Cref{thm:sing_val_approx_unif} in the setting where first $n\gives \infty$ and then $N\gives \infty$. Prior work has also taken up a non-asymptotic analysis of eigenvalues \cite{gotze2010asymptotic,gotze2021rate} for such matrix products as well as the local distribution of their singular values \cite{liu2016bulk}. 

The setting when $n,N$ simultaneously grow is less well understood but has nonetheless attracted significant interest in recent years. For example, we point the reader to a beautiful set of the articles that use techniques from integrable systems and integrable probability to study singular values for products of iid complex Ginibre matrices and related integrable matrix ensembles. These include the works \cite{akemann2014universal,akemann2012universal,akemann2019integrable,akemann2019universal,burda2012spectral,burda2013commutative} which, at a physics level of rigor, were the first to analyze the asymptotic distribution of singular values for products of iid complex Ginibre matrices. Some of the results in the preceding articles were proved rigorously in \cite{liu2018lyapunov}. We also point the interested reader to \cite{ahn2022fluctuations, gorin2018gaussian} another perspective on how to use techniques from integrable probability to study such matrix products. The study of the singular values of $X_{N,n}$ when $n,N$ are both large has also received attention due to its connection with the spectrum of input-output Jacobians in randomly initialized neural networks \cite{pennington2017resurrecting, pennington2018emergence, DBLP:conf/aistats/PenningtonSG18, hanin2020products, hanin2018neural}.

\section{Main ideas and proof outline}\label{sec: proof short}
As we will explain in this section, there are three key steps in the proof of \Cref{thm:sing_val_approx_unif}. To present them, let us agree on some notations. We write $\fr_{n,k}=\{X\in\R^{n\times k}: X^TX = \I_k\}$ for the space of $k$-frames in $\R^n$ (i.e. orthonormal systems of $k$ vectors). For any matrix $A\in\R^{a\times b}, a, b\geq k$, we write:
$\|A\|_{(k)} \triangleq \prod_{i=1}^k s_i(A)$
the product of top $k$ singular values.
For any $n\times k$ matrix $X$ we thus have
\begin{equation}\label{eq:wedge-norm-def}
\normm{X} =\prod_{i=1}^ks_i(X) =\sqrt{\det(X^TX)}    
\end{equation}
following the Gram identity. Unless specified otherwise, all constants are finite, positive, and may depend on $M_4, K_\infty$, the moments in \Cref{cond:main_mu}.  

\paragraph{Step 1: From many singular values to the top singular value.} To study the singular values of $X_{N,n}$, it will be convenient to study their partial products:
\begin{equation}\label{eq:XV-def}
\norm{X_{N,n}}_{(k)} = \prod_{i=1}^k s_i(X_{N,n}) =  \sup_{U\in \fr_{n,k}}\normm{X_{N,n}U},     \qquad k = 1,\ldots, n,
\end{equation}
where the first equality is a definition and the second equality follows from standard linear algebra. The representation on the right of \eqref{eq:XV-def} recasts the product of the top $k$ singular values of $X_{N,n}$ as the top singular value for the action of $X_{N,n}$ on the space of $k$-frames in $\R^n$. This is useful since analyzing the top singular value, or equivalently top Lyapunov exponent, is a natural and well-studied way to understand the long time behavior of a dynamical system. This is precisely the philosophy of most prior work in the regime where $N\gives \infty$ (see e.g. \cite{isopi1992triangle, furstenberg1960products, hanin2021non, le2006theoremes}).

\paragraph{Step 2: Removing the supremum in \eqref{eq:XV-def}.} One advantage of the representation on the right of \eqref{eq:XV-def} is that each term $\normm{X_{N,n}U}$ inside the supremum can naturally be thought of as simple sub-multiplicative functional of the state of a random dynamical system after $N$ steps. In this analogy, the frame $U$ determines the initial condition and the evolution from time $t-1$ to time $t$ consists of multiplying by $W_t$. 

The second and most important technical step in our proof of \Cref{thm:sing_val_approx_unif} is to show that once $N$ is large, we can approximately drop the supremum over frames in \eqref{eq:XV-def}. That this is possible is a key conceptual insight that goes back to \cite{furstenberg1960products}, which showed that for a wide range of entry distributions $\mu$, we have
$$\lim _{N \rightarrow \infty}\frac{1}{N} \log \lr{\frac{\sup_{V\in \fr_{n,k} }\normm{X_{N,n}V}}{\normm{X_{N,n}U_0}}}=0$$
on fixed $U_0\in \fr_{n,k}$. The previous displayed equation is a consequence of the multiplicative ergodic theorem, which guarantees that as $N\gives \infty$ the supremum, the average, and the pointwise behavior of $\normm{X_{N,n}V}$ is the same for almost every frame $V$. Since we seek to describe the distribution of singular values of $X_{N,n}$ when $N$ is finite, we will need a quantitative version of this result. This is the content of \Cref{prop:sup_eq_any}, which is more conveniently phrased in terms of the Lyapunov exponents of $X_{N,n}$:
\[
\lambda_i = \lambda_i(X_{N,n})=\frac{1}{N}\log s_i(X_{N,n})=i\text{-th Lyapunov exponent},\qquad \sum_{i=1}^k \lambda_i = \frac{1}{N}\log \norm{X_{N,n}}_{(k)}
\]
\begin{proposition}[Reduction from sup norm to pointwise norm]\label{prop:sup_eq_any}
     Denote by $U_0=\I_{[k]}^T\in\fr_{n, k}$ the $k$-frame whose columns are the first $k$ standard basis vectors of $\R^n$. Then, assuming \Cref{cond:main_mu} holds, there exist constants $c_1, c_2, c_3>0$ depending only on $K_\infty, M_4$, such that for any $n\geq k$:
    \begin{equation}\label{eqn: small_ball_lya}
    \mathbb{P}\left(\frac{1}{n}\abs{\sum_{i=1}^k \lambda_i-\frac{1}{N}\log \normm{X_{N, n}U_0} }>s\right) \leq c_1 \exp \{-c_2 n N s\}.
    \end{equation}
    for all $ s>c_3\frac{k\log(en/k)}{nN}$. 
\end{proposition}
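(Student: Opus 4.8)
The plan is to trade the supremum over frames implicit in $\norm{X_{N,n}}_{(k)}$ for a single (random) frame by a deterministic inequality, and then reduce \eqref{eqn: small_ball_lya} to a small‑ball estimate for the top singular frame of $X_{N,n}$. Let $V_\ast=V_\ast(X_{N,n})\in\fr_{n,k}$ be the frame of top $k$ right singular vectors of $X_{N,n}$ (a.s.\ well defined, since under \Cref{cond:main_mu} each $W_i$ is invertible a.s.). The key deterministic fact is that for any $A\in\R^{n\times n}$ of rank $\geq k$ and any $U\in\fr_{n,k}$,
\[
\norm{AU}_{(k)}\;\geq\;\norm{A}_{(k)}\cdot\abs{\det\!\lr{V_\ast(A)^T U}}.
\]
This follows in one line from Cauchy–Binet: expanding $A^TA=\sum_{i=1}^n s_i(A)^2 v_iv_i^T$ spectrally gives $\norm{AU}_{(k)}^2=\det\!\lr{\sum_i s_i(A)^2(U^Tv_i)(U^Tv_i)^T}=\sum_{\abs{S}=k}\lr{\prod_{i\in S}s_i(A)^2}\det([U^Tv_i]_{i\in S})^2$, and discarding all terms but $S=\{1,\dots,k\}$ leaves $\norm{A}_{(k)}^2\det(V_\ast(A)^TU)^2$. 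Applying this with $A=X_{N,n}$ and $U=U_0$, and using $\sum_{i=1}^k\lambda_i=\tfrac1N\log\norm{X_{N,n}}_{(k)}\geq\tfrac1N\log\norm{X_{N,n}U_0}_{(k)}$ from \eqref{eq:XV-def}, the quantity in \eqref{eqn: small_ball_lya} is nonnegative and
\[
\begin{aligned}
0\;\leq\;\sum_{i=1}^k\lambda_i-\tfrac1N\log\norm{X_{N,n}U_0}_{(k)}
&= \tfrac1N\log\frac{\norm{X_{N,n}}_{(k)}}{\norm{X_{N,n}U_0}_{(k)}}\\
&\leq -\tfrac1N\log\abs{\det(V_\ast^TU_0)}.
\end{aligned}
\]
Since the columns of $U_0$ are $e_1,\dots,e_k$, the matrix $V_\ast^TU_0$ is just the $k\times k$ matrix formed by the first $k$ rows of $V_\ast$. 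Hence \eqref{eqn: small_ball_lya} follows as soon as we prove the small‑ball bound $\P(\abs{\det(V_\ast^TU_0)}<\varepsilon)\leq c_1\varepsilon^{c_2}$ for every $\varepsilon<e^{-c_3 k\log(en/k)}$; applying it with $\varepsilon=e^{-nNs}$ and $s>c_3 k\log(en/k)/(nN)$ gives precisely the claimed estimate. (Sanity check: were $V_\ast$ Haar on $\mathrm{Gr}(n,k)$, then for $k\ll n$ one would have $\abs{\det(V_\ast^TU_0)}\asymp n^{-k/2}$ with small‑ball probability $\asymp\mathrm{poly}(n)\cdot\varepsilon$, so the threshold $e^{-c_3 k\log(en/k)}$ sits, for $c_3$ large, well below the typical value, and the above is a genuine lower‑tail bound on the alignment of the fixed frame $U_0$ with the sample‑dependent top singular frame.)

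To prove the small‑ball estimate I would condition on $W_2,\dots,W_N$, so that $X_{N,n}=ZW_1$ with $Z=W_N\cdots W_2$ fixed and only $W_1$ random, and exploit the bounded density of the entries of $W_1$: multiplying the extremely ill‑conditioned matrix $Z$ on the right by $W_1$ genuinely rotates its top singular directions. Unwinding the SVD of $X_{N,n}=ZW_1$ one can write $\abs{\det(V_\ast^TU_0)}=\abs{\det(\tilde A\,L)}$, where $\tilde A\in\R^{k\times n}$ is the transpose of the first $k$ columns of $W_1$ (an i.i.d.\ matrix with density bounded via \Cref{cond:main_mu}) and $L\in\R^{n\times k}$ is assembled from $Z$ and the left singular data of $X_{N,n}$. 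Together with the crude bound $\norm{W_1}_{(k)}\leq\norm{W_1}^k$ (whose tail is controlled using $M_4$), this would reduce matters to anti‑concentration of the $k\times k$ determinant $\det(\tilde A L)$, whose $k$ rows are independent with densities bounded by a power of $n$. Such a determinant is bounded below through the base‑times‑height decomposition $\abs{\det(\tilde A L)}=\prod_{i=1}^k\mathrm{dist}(r_i,\mathrm{span}(r_1,\dots,r_{i-1}))$ and one‑dimensional anti‑concentration at each step, giving $\P(\abs{\det(\tilde A L)}<\delta)\lesssim(\mathrm{poly}(n)\log(1/\delta))^k\,\delta$, which beats $c_1\delta^{1/2}$ exactly when $\delta$ is polynomially small in $n^{-k}$ — the regime dictated by the threshold.

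The hard part is making the factorization $\abs{\det(V_\ast^TU_0)}=\abs{\det(\tilde A L)}$ actually usable, because $L$ is not independent of $\tilde A$ (both involve $W_1$). One must first replace $V_\ast$ by a surrogate depending only on the first $k$ columns of $W_1$ together with data independent of $W_1$ — morally, the Gram–Schmidt orthonormalization of $W_1^TG$, with $G$ the top‑$k$ right singular frame of $Z$. For isotropic $\mu$ (e.g.\ standard Gaussian) this surrogate is essentially exact and harmless, since $W_1^TG\stackrel{d}{=}W_1$; for general $\mu$ that identity fails, which is exactly why the isotropic argument of \cite{hanin2021non} does not transfer and why the anti‑concentration must be argued ``entry by entry.'' Worse, the natural Davis–Kahan bound for the discrepancy between $V_\ast$ and this surrogate has size $(s_{k+1}(Z)/s_k(Z))^2\cdot\mathrm{poly}(n)$, which is below $\varepsilon$ only when the finite product $Z$ has a spectral gap of order $(k\log n)/N$ — and this is false throughout much of the admissible range, in particular when $N\ll n$. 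Producing a small‑ball estimate for the singular vectors of a long product of non‑isotropic random matrices that is uniform in $N$ — in effect a non‑asymptotic, density‑based substitute for the Oseledets splitting — is therefore the crux, and it is here that the bounded‑density hypothesis of \Cref{cond:main_mu} is indispensable.
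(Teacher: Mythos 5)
Your opening reduction is correct: the Cauchy--Binet bound $\normm{X_{N,n}U_0}\geq \norm{X_{N,n}}_{(k)}\,\abs{\det(V_\ast^TU_0)}$ is the $k$-frame version of the paper's $k=1$ heuristic, and converting the proposition into a lower-tail bound at the threshold $\eps=e^{-nNs}$, $nNs\gtrsim k\log(en/k)$, is exactly the right target (the paper's intermediate goal \eqref{eqn: equiv format smallball}). But from there your plan requires a small-ball estimate for $\abs{\det(V_\ast^TU_0)}$, where $V_\ast$ is the top singular frame of the \emph{whole product}, and your proposed route to it --- condition on $Z=W_N\cdots W_2$, build a surrogate frame from $W_1^TG$, and compare with $V_\ast$ via Davis--Kahan --- does not close: as you yourself observe, the perturbation bound needs a spectral gap for $Z$ that is unavailable when $N\ll n$, and the isotropy identity $W_1^TG=_dW_1$ fails for general $\mu$. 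That unresolved ``crux'' is not a technicality; it is the entire content of the proposition, so the proposal has a genuine gap. Moreover, insisting on controlling $V_\ast$ itself makes the problem strictly harder than necessary.

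The paper never touches the singular vectors of $X_{N,n}$. Instead it peels off one factor deterministically: by sub-multiplicativity (\Cref{lem: sub mult k norm}), $\norm{X_{N,n}}_{(k)}\leq \norm{W_1}_{(k)}\norm{Z}_{(k)}$, and, taking $\Theta_0,L\in\fr_{n,k}$ to be the top-$k$ left and right singular frames of $Z$ (measurable in $W_2,\dots,W_N$, hence independent of $W_1$), one has $\normm{X_{N,n}U_0}\geq \abs{\det(\Theta_0^TX_{N,n}U_0)}=\abs{\det(L^TW_1U_0)}\cdot\norm{Z}_{(k)}$. Hence $0\leq-\log\bigl(\normm{X_{N,n}U_0}/\norm{X_{N,n}}_{(k)}\bigr)\leq \log\bigl(\norm{W_1}_{(k)}/\abs{\det(L^TW_1U_0)}\bigr)$: the product has disappeared, $N$ plays no role, and the randomness is a single i.i.d.\ matrix tested against a frame that can be conditioned to be deterministic --- no surrogate frames, no spectral gap, no perturbation theory. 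The choice of test frame $\Theta_0$ (top left frame of $Z$, not of $X_{N,n}$) is the one idea your argument is missing. The remaining anti-concentration is then exactly the single-matrix statement your base-times-height sketch was aiming at, which the paper carries out as a negative-moment bound $\E[(\abs{\det T}^2/k!)^{-C_1}]\leq e^{C_2(1+\log k)}$ for $T$ an i.i.d.\ $k\times n$ matrix times a fixed frame (\Cref{lem:sq_det_small}, using the bounded density via \Cref{lem:small_ball_proj_applied} and the fourth moment), together with a Haar-frame comparison and Markov's inequality to relate $\norm{W_1}_{(k)}$ to its typical scale. Note also that this is where $U_0=\I_{[k]}^T$ is used: $L^TW_1U_0$ involves only one non-canonical frame against i.i.d.\ entries; for general $U_0$ the paper needs the separate, quantitatively weaker argument of \Cref{sec: any U0}.
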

We prove \Cref{prop:sup_eq_any} in \Cref{sec: reduction proof} and emphasize here only the main ideas. The key observation is that the estimate \eqref{eqn: small_ball_lya} follows from proving that the subspace spanned by the top $k$ singular vectors of $X_{N, n}$ is ``well-spread'' on the Grassmannian with high probability when $N\gg 1$. To explain this, let us consider the simple but illustrative case of $k=1$. Our goal is then to obtain a lower bound for
\[
 0\geq \frac{1}{Nn}\log \|{X_{N,n}v}\|_{(1)}-\frac{1}{n}\lambda_1=\frac{1}{Nn}\log\frac{\|X_{N,n}v\|}{\|X_{N, n}\|_{op}},
\]
where $v=[1,0,0,\dots]^T$. If $s_i$ are the singular values of $X_{N,n}$ and $e_i, f_i$ are the corresponding right and left singular vectors, then 
\[
X_{N,n}v = \sum_{i=1}^n s_i\langle v, e_i \rangle f_j
\]
and we have
\begin{equation}\label{eq:k1-heuristic}
\frac{2}{Nn}\log\frac{\|X_{N,n}v\|}{\|X_{N, n}\|_{op}}= \frac{1}{Nn}\log\frac{\sum_{i=1}^n s_i^2|\langle v, e_i \rangle|^2}{s_1^2}\geq \frac{1}{Nn}\log|\langle v, e_1\rangle|^2.    
\end{equation}
Obtaining lower bounds on $\langle v, e_1\rangle$  is the same as obtaining small ball probabilities for $e_1$ around the orthogonal complement to $v$. Repeating this argument for general $k$ shows that \Cref{prop:sup_eq_any} will follow from the statement that ``the distribution of the top singular vectors does not concentrate on a fixed co-dimension $k$ subspace''. See \Cref{lem:sq_det_small} for the key result to this end.

Prior work \cite{burda2012spectral, burda2013commutative, akemann2014universal, ahn2022fluctuations, gorin2022gaussian, hanin2021non} assumed that the matrices $W_i$ are rotationally invariant and hence that the law of right singular vectors $(e_i)_{i=1}^k$ is Haar for every $k$. In particular, when $k=1$ this implies 
$$\mathbb P\lr{|\langle v, e_1\rangle|\leq \epsilon /\sqrt{n}} \leq C\epsilon$$
for a universal constant $C$. The main technical difficulty in our present setting is that we do not know how to characterize the (joint) distribution of the top $k$ singular vectors of $X_{N, n}$, even when $k=1$. Nonetheless, we obtain small ball probabilities for $\log \lr{\normm{X_{N,n}U_0}/\norm{X_{N,n}}_{(k)}}$ in \Cref{sec: reduction proof} relying only on small ball probabilities for the entrywise measure $\mu$. 

Finally, we mention that the frame $U_0$ from \Cref{prop:sup_eq_any} does not have to be the canonical frame. A different proof shows, that a slightly weaker version of \eqref{eqn: small_ball_lya} holds for \emph{any} $U_0\in\fr_{n, k}$ (see \cref{eqn: small_ball_lya_any}). This yields seemingly new information on the problem of studying singular vectors of $W_1$, which we believe is of independent interest. We defer this discussion to \Cref{sec: any U0}.

\paragraph{Step 3: Doob decomposition and concentration for $\log\norm{X_{N,n}U_0}$ in \eqref{eq:XV-def}.}  In light of \Cref{prop:sup_eq_any}, estimating the partial products $\norm{X_{N,n}}_{(k)}$ comes down to bounding the ``point-wise'' norms $\normm{X_{N, n}U_0}$ for a fixed frame $U_0 \in\fr_{n, k}$, which is done in the following
\begin{proposition}\label{prop: ptwise any}
    Assuming \Cref{cond:main_mu}, there exist constants $c_1, c_2, c_3>0$ depending on $M_4, K_\infty$ such that for any $n, k, N$, any $U_0\in\fr_{n, k}$, and for all $s\geq c_3 n^{-1}\log (ek)$:
    \begin{equation}\label{eqn: concentrate single matt}
    \P\left(\frac{1}{n}\left|\frac{1}{N}\log\normm{X_{N, n}U_0}-\frac{1}{2}\sum_{j=1}^k\log\frac{n-i+1}{n}\right|>s\right)\leq c_1\exp\{-c_2sn/\log (ek)\}.
\end{equation}
\end{proposition}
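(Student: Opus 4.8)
The plan is to turn the $N$-fold product into a sum of single-matrix contributions by a Gram--Schmidt telescoping, and then to combine a Doob decomposition with a martingale Bernstein inequality. Set $Y_0=U_0$, $Y_t=W_tY_{t-1}$, so that $Y_N=X_{N,n}U_0$, and write the thin QR factorization $Y_{t-1}=Q_{t-1}R_{t-1}$ with $Q_{t-1}\in\fr_{n,k}$ and $R_{t-1}$ upper triangular, which is valid off a $\mu$-null set since $\mu$ has a density. From $\normm{A}^2=\det(A^TA)$ and $Q_{t-1}^TQ_{t-1}=\I_k$ one gets $\normm{Y_t}^2=\det(R_{t-1})^2\normm{W_tQ_{t-1}}^2=\normm{Y_{t-1}}^2\normm{W_tQ_{t-1}}^2$, and telescoping (with $\normm{U_0}=1$) yields
\[
\log\normm{X_{N,n}U_0}=\sum_{t=1}^{N}Z_t,\qquad Z_t:=\log\normm{W_tQ_{t-1}}.
\]
Since $Q_{t-1}$ is $\F_{t-1}$-measurable while $W_t\indep\F_{t-1}$, conditionally on $\F_{t-1}$ each $Z_t$ has the law of $\log\normm{WQ}$ for a deterministic frame $Q=Q_{t-1}$ and a fresh copy $W$ of $W_1$.

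The technical heart is a one-matrix estimate, uniform over frames: there are constants $C,c>0$ (depending on $M_4,K_\infty$) so that, writing $\mu_k:=\tfrac12\sum_{j=1}^{k}\log\tfrac{n-j+1}{n}$, for every $Q\in\fr_{n,k}$ one has $\big|\E\log\normm{WQ}-\mu_k\big|\le C\log(ek)$, and $\log\normm{WQ}$ concentrates about its mean with variance $\le C\log(ek)$ and sub-exponential tails of scale $\lesssim\log(ek)$. For the mean I would write $\normm{WQ}^2=\det\!\big(\sum_{i=1}^n\eta_i\eta_i^T\big)$ with $\eta_i\in\R^k$ the $i$-th row of $WQ$, so the $\eta_i$ are i.i.d., $\E\eta_i\eta_i^T=\I_k/n$ and $\E\|\eta_i\|^4\le Ck^2/n^2$ (using orthonormality of $Q$ and $M_4$); build $A_i:=\sum_{\ell\le i}\eta_\ell\eta_\ell^T$ one sample at a time via the determinant lemma $\det A_i=\det A_{i-1}(1+\eta_i^TA_{i-1}^{-1}\eta_i)$ and compare $\E\log(1+\eta_i^TA_{i-1}^{-1}\eta_i)$ to its Gaussian counterpart by a Lindeberg/Taylor argument: the leading term $n^{-1}\mathrm{tr}\,A_{i-1}^{-1}$ is distribution-free, the corrections are controlled by $M_4$ and the concentration of $\mathrm{tr}\,A_{i-1}^{-1}$, and the delicate near-square range $i\approx k$ is handled using that $K_\infty$ bounds $\|A_{i-1}^{-1}\|$. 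For the concentration, the upper tail uses that $\normm{WQ}$ is large only when some $\|\eta_i\|$ is large (probability $\lesssim (k^2/n)R^{-4}$ at scale $R$), and the lower tail is the anti-concentration bound that $\P(s_k(WQ)<\eps)$ is super-exponentially small below the typical scale $n^{-1/2}$, proved by a smallest-singular-value/net argument over the $k$-dimensional column space of $Q$---this is the one step where the bounded density $K_\infty$ is indispensable. Because $\mu$ has only a fourth moment, all of this is preceded by a soft truncation of the entries of the $W_t$ at a level polynomial in $nN$, which perturbs the first four moments and the density bound negligibly and differs from the original only on an event whose probability is dominated by the final bound.

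Granting these, take the Doob decomposition $Z_t=m_t+D_t$ with $m_t:=\E[Z_t\smid\F_{t-1}]$ predictable and $D_t:=Z_t-m_t$ a martingale difference for $(\F_t)$. The one-matrix estimate applied to the realized $Q_{t-1}$ gives $|m_t-\mu_k|\le C\log(ek)$ pointwise, $\var(D_t\smid\F_{t-1})\le C\log(ek)$, and sub-exponential $D_t$ of scale $\lesssim\log(ek)$. Then
\[
\frac{1}{nN}\log\normm{X_{N,n}U_0}-\frac{\mu_k}{n}=\frac{1}{nN}\sum_{t=1}^{N}(m_t-\mu_k)+\frac{1}{nN}\sum_{t=1}^{N}D_t,
\]
where the first, predictable, term is at most $C\log(ek)/n\le s/2$ provided $s\ge c_3\log(ek)/n$ with $c_3\ge 2C$. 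Bernstein's inequality for martingales bounds $\P(|\sum_t D_t|>nNs/2)$, and with $\sum_t\var(D_t\smid\F_{t-1})\le CN\log(ek)$, tail scale $\lesssim\log(ek)$, and the fact that $s\ge c_3\log(ek)/n$ forces $ns\ge\log(ek)$, a short computation reduces this to $c_1\exp(-c_2\,nNs/\log(ek))\le c_1\exp(-c_2\,ns/\log(ek))$ since $N\ge 1$. Combining with the bias estimate and the probability of the truncation event proves the claim.

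The step I expect to be the main obstacle is the one-matrix estimate: the universality of $\E\log\normm{WQ}$ and its sub-exponential concentration, \emph{uniformly over every $Q\in\fr_{n,k}$}, under only a bounded fourth moment. The fourth-moment hypothesis necessitates the truncation and makes the upper-tail control delicate; the lower tail genuinely needs anti-concentration of $s_k(WQ)$; and the mean comparison to $\mu_k$, through the determinant-lemma recursion and a Lindeberg swap for log-determinants of sample-covariance matrices, is a substantial computation in its own right.
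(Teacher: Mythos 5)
Your skeleton coincides with the paper's: the QR telescoping is the same decomposition the paper obtains via an SVD update $W_tU_t=U_{t+1}\operatorname{diag}(\{s_i(W_tU_t)\})O_t$, giving $\log\normm{X_{N,n}U_0}=\sum_{t}\log\normm{W_tQ_{t-1}}$ with $Q_{t-1}$ predictable, and the final step (Doob decomposition plus a Bernstein/Chernoff bound for conditionally sub-exponential, nearly centered summands) is essentially the paper's \Cref{lem: mtg concentrate}. The gap is in the one-matrix estimate, which is where all the content lies. For the centering, no Lindeberg swap or Gaussian comparison is needed: for \emph{any} frame $Q$ and any mean-zero unit-variance entries one has the exact identity $\E\normm{WQ}^2=\prod_{j=1}^k\frac{n-j+1}{n}$ (Cauchy--Binet over the $k\times k$ minors of $\sqrt{n}WQ$, each with second moment $k!$), and the upper tail is then just Markov. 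Your recursion route might be salvageable, but the step ``the near-square range is handled using that $K_\infty$ bounds $\|A_{i-1}^{-1}\|$'' is not valid: a bounded density yields small-ball/negative-moment control, not an almost-sure bound on the inverse sample covariance.

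More seriously, your lower-tail argument does not reach the claimed scale. What is needed, conditionally on the filtration and uniformly over frames, is $\P\big(\log\normm{WQ}^2<\log\E\normm{WQ}^2-t\big)\lesssim e^{-ct/\log(ek)}$ for all $t\gtrsim\log(ek)$; this is exactly what produces the $sn/\log(ek)$ rate and the threshold $s\gtrsim\log(ek)/n$. Controlling only $\P(s_k(WQ)<\eps)$ cannot give this: bounding the log-determinant through the smallest singular value alone costs a factor $s_k^{2k}$, i.e.\ an error of order $k\log n$ in $\log\normm{WQ}^2$ (and net arguments for $s_k$ carry $e^{Ck}$ union-bound factors), so at best you get sub-exponential tails at scale $\Omega(k\log n)$, which would only prove the proposition for $s\gtrsim k\log n/n$ --- too weak for \Cref{thm:sing_val_approx_unif}. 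The paper's substitute is the negative-moment bound of \Cref{lem:sq_det_small}, $\E[(|\det(MU)|^2/k!)^{-C_1}]\le (ek)^{C_2}$, proved row by row: $|\det|^2/k!$ factors into terms $C^{(t)}/t$ where $C^{(t)}$ is the squared distance of a fresh row to the span of the previous ones, each contributing $e^{C/t}$ via a Rudelson--Vershynin small-ball estimate (this is where $K_\infty$ enters) and a fourth-moment variance computation, with the harmonic sum producing the $\log k$; Cauchy--Binet and Jensen then give $\E[(\normm{WQ}^2/\E\normm{WQ}^2)^{-C_1}]\le(ek)^{C_2}$ and hence the lower tail at scale $\log(ek)$. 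Without this (or an equivalent joint control of the whole spectrum's lower deviations) your proposal does not prove the stated bound; the entry truncation, incidentally, is unnecessary, since only second moments enter the centering and the fourth moment enters only through $\var(C^{(t)})$.
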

We prove \Cref{prop: ptwise any} in \Cref{sec: ptwise proof}. The main idea is to express $\log\normm{X_{N, n}U_0}$ as an average. For this, let $U_1 = U_0$ and define $U_{t+1}$ inductively through the singular value decomposition of $W_tU_t$: 
\[
W_tU_t = U_{t+1}\operatorname{diag}(\{s_i(W_tU_t)\}_{i=1}^k)O_{t},\qquad O_{t}\in\fr_{k, k},\, U_{t+1}\in\fr_{n, k}
\]
Then, recalling \eqref{eq:wedge-norm-def} and noting that $U_t$ is a measurable function of $W_1,\ldots, W_{t-1}$, a simple computation gives the following equality in distribution
\begin{equation}\label{eqn: point norm prod decomp}
\frac{1}{N}\log \|X_{N, n}U\|_{(k)}^2 = \frac{1}{N}\sum_{i=1}^N \log \|W_iU_i\|_{(k)}^2.
\end{equation}
A direct computation now shows that given any $U\in \fr_{n,k}$ we have
\begin{equation}\label{eqn: exp norm}
    \E_{\sqrt{n}W\sim\mu^{\otimes n\times n}}\left[\normm{WU}^2\right] =\frac{n!}{n^k(n-k)!} = \prod_{j=1}^k\frac{n-j+1}{n}.
\end{equation}
These expectations determine the constants around which $\frac{1}{N}\log \normm{X_{N,n}U_0}$ concentrates in \Cref{prop:sup_eq_any}. The result then follows from an Azuma-type concentration inequality for random variables with sub-exponential tails (see \Cref{lem: mtg concentrate}).

\paragraph{Combining everything together.}
Putting \Cref{prop:sup_eq_any} and \Cref{prop: ptwise any} we get for $s \gtrsim \log(ek)/n$ that
\begin{equation}\label{thm: main technical concentration}
    \mathbb{P}\left(\frac{1}{n}\left| \sum_{i=1}^k\lambda_i(X_{N, n})-\frac{1}{2} \sum_{j=1}^k \log \frac{n-i+1}{n}\right|>s\right) \leq c_1 \exp \left\{-c_2 s n / \log (e k)\right\} .
\end{equation}
Together with some elementary algebra in \Cref{sec: main proofs end}, we are able to control the cumulative distribution function for the empirical measure $\rho_{N,n}$ of singular values with high probability.

\section{Discussions}\label{sec: limitations}
In this section, we discuss some extensions and limitations of the present work. 
\paragraph{Dependence on $n, N$.} We begin by briefly remarking on the dependence in \Cref{thm:sing_val_approx_unif} and relation \eqref{thm: main technical concentration} on $n, N$. In particular, for fixed $N$, consider an i.i.d. sequence of $\{X_{N, n}\}$ and let $\eps\asymp N^{-1/2}$ one has:
$$
\P\left(\sup _{t \in \mathbb{R}}\left|\frac{1}{n} \#\left\{1 \leq i \leq n \mid s_i^{2 / N}\left(X_{N, n}\right) \leq t\right\}-\mathcal{U}(t)\right| \geq c_1N^{-1/2}\right)\leq 2\exp\{-c_2 n/\log n\}
$$
where $\mathcal{U}(t) \triangleq \P_{x\sim\mathrm{U}[0, 1]}(x\leq t)$. By Borel-Cantelli lemma, this implies that with probability one,
$$
\sup _{t \in \mathbb{R}}\left|\lim _{n \rightarrow \infty} \frac{1}{n} \#\left\{1 \leq i \leq n \mid s_i^{2 / N}\left(X_{N, n}\right) \leq t\right\}-\mathcal{U}(t)\right| \leq \frac{C}{\sqrt{N}}.
$$
The $O(1/\sqrt{N})$ rate can be seen as a Berry-Esseen type bound, see also \cite[Section 1.2]{hanin2021non}. This suggests that our dependence on $N$ is at least comparable to standard CLT rates. However, since we require in \eqref{thm: main technical concentration} that $s\in\Omega(\log (ek)/n)$ (even when $N\to\infty$), is unclear whether the dependence on $n$ is optimal. This dependence, unfortunately, cannot be improved significantly based on current techniques. To illustrate this, consider $k=1$. The dependence of the mean $\E[\log\|WU_0\|^2]=\E_{x_i\sim_{i.i.d.}\mu}[\log\sum_{i=1}^n x_i^2]$ on $\mu$ can be shown to be $\Omega(1/n)$ (i.e. there exists different $\mu$'s such that the expectation differs by $\Omega(1/n)$). The studies of more fine-grained behaviors of Lyapunov exponents in the ergodic regime (when universality does not hold for $n<\infty$) are left open to future works.

\paragraph{Extension to complex matrices.} 
While our proofs are formulated for real matrices, we remark that all results can be directly extended to complex random variables under the following assumptions on $\mu$:
\begin{condition}\label{cond:complex entries}
    Suppose that each entry of the $\sqrt{n}W_i$'s are i.i.d. drawn from a distribution $X+Yi$, where $X\sim\mu_X$  and $Y\sim\mu_Y$ are independent (real) random variables satisfying (1) zero mean and unit variance: $\E[X]=\E[Y]=0, \E[X^2+Y^2]=1 $; (2) finite fourth moment $\E[X^4+Y^4]\leq M_4 <\infty$; and (3) density bounded above $\|\mu_X\|_{L_\infty}, \|\mu_Y\|_{L_\infty}\leq K_\infty<\infty$. 
\end{condition}
Extending both \Cref{prop:sup_eq_any} and \Cref{prop: ptwise any} mainly requires replacing transpose with conjugate transpose and absolute values with norms does not require much conversions at all. The only nontrivial technical difference lies in transforming existing small ball estimates for real random variables (which appears in \Cref{lem:small_ball_proj_applied}) to their complex analog. This can be done by noting that a complex frame $A+Bi\in\fr_{n, k}$ can be converted into a real frame $\begin{bmatrix}
    A & -B\\
    B & A
\end{bmatrix}\in\fr_{2n, 2k}$ (and defining the {density} of a $\mathbb{C}^d$ complex distribution to be the density of the distribution of their canonical real decomposition in $\R^{2d}$). As a result, \Cref{thm:sing_val_approx_unif} and \eqref{thm: main technical concentration} holds under \Cref{cond:complex entries} as well with different set of constants (versus \Cref{cond:main_mu}). Moreover, in fact,
our proof only needs that $W_i$'s are independent and distributed according to $W_i=_dA_iP_i$ where $P_i$ is any fixed orthogonal matrix and $A_i\sim\mu_i^{\otimes n\times n}$ for some $\mu_i$ satisfying prescribed conditions.

\paragraph{Dependence on constants $M_4, K_\infty$.}
A careful analysis of the proof shows that the constants $c_1,c_2,c_3,c_4$ appearing in the statement of \Cref{thm:sing_val_approx_unif} can be taken to depend on constants $M_4, K_\infty$ as follows
\begin{equation}\label{eqn: explicit constants}
    c_1 \in O(K_\infty^\delta\cdot M_4),\quad c_2 \in O(K_\infty^\delta\cdot M_4),\quad c_3=2,\quad c_4^{-1}\in O(K_\infty^\delta\cdot M_4),
\end{equation}
where $\delta>0$ is arbitrary but fixed and the implicit constants in the big-O terms are universal. 

\section{Review of auxiliary technical results}\label{sec: preliminaries}
Before we complete all the deferred proofs, we collect below we several technical results use in our main proofs and establish some notation. 

\paragraph{Notation.} We use $=_d$ to denote equivalence in distribution. Unless specified otherwise, we use $\land$ to denote the minimum and $\lor$ the maximum of two numbers. We denote $[r]\triangleq\{1,2,\dots, r\}, r\in\mathbb{N},$ and ${\binom{[r]}{t}}\triangleq\{I\subseteq[r]: |I|=t\}$.
When not specified otherwise, we write (for a $m\times n$ matrix $A$) $A_i\in\R^n$ as the $i$-th row of $A$ for $i\in [m]$ and $A_I\in\R^{|I|\times n}$ as the $|I|\times n$ submatrix for $I\subseteq [m]$. Furthermore, we denote $s_1(A)\geq s_2(A)\geq\dots$ be the ordered singular values of any matrix $A$.

\paragraph{An isotropic inequality for right product with random uniform frames.}
We will examine the effect of a ``uniformly random'' frame being applied on any matrix. 
\begin{lemma}[See also Section 9 in \cite{hanin2021non}]\label{lem: random wedge product}
    There exists a constant $c$ with the following property. Suppose $G$ is sampled from the Haar measure on $\fr_{n,k}$. For any invertible matrix $M\in\R^{n\times n}$
    $$\mathbb{P}\left(\left(\frac{\|MG\|_{(k)}}{\|M\|_{(k)}}\right)^{\frac{1}{k}} \leq \varepsilon \sqrt{\frac{k}{n}}\right) \leq(c \varepsilon)^{\frac{k}{2}}.$$
\end{lemma}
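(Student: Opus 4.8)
The plan is to reduce the claim about the Haar-random frame $G \in \fr_{n,k}$ to a statement about a single column of a Haar-random orthogonal matrix, via the chain rule for determinants, and then invoke a small-ball estimate for the inner product of a fixed direction with a uniform point on the sphere. Writing $M = M$ and using the polar / singular value decomposition, we may assume $M = \operatorname{diag}(s_1,\dots,s_n)$ with $s_1 \geq \cdots \geq s_n > 0$ (left and right rotations of $M$ do not change $\|M\|_{(k)}$, and right rotation preserves the Haar law of $G$). The quantity $\|MG\|_{(k)}^2 = \det(G^T M^T M G)$ can be expanded by the Cauchy–Binet formula as $\sum_{|I|=k} (\det M_I)^2 (\det G_I)^2 = \sum_{|I|=k}\big(\prod_{i\in I}s_i^2\big)(\det G_I)^2$, and similarly $\|M\|_{(k)}^2 = \prod_{i=1}^k s_i^2$. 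So the ratio is a convex combination (in a loose sense) of products of $s_i^2$, dominated by the $\{1,\dots,k\}$ term; this shows $\|MG\|_{(k)} \leq \|M\|_{(k)}$ always, and we need a lower bound that holds with high probability.

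The key reduction is the telescoping identity for the wedge norm under successive rank-one updates: extend $G$ to a Haar-random orthogonal $n\times n$ matrix $O = [g_1,\dots,g_n]$ with $G = [g_1,\dots,g_k]$, and write
\[
\|MG\|_{(k)}^2 \;=\; \prod_{j=1}^{k}\; \operatorname{dist}\big(Mg_j,\ \operatorname{span}(Mg_1,\dots,Mg_{j-1})\big)^2 \cdot (\text{correction}),
\]
or more cleanly, use the conditional structure of Haar measure: conditioned on $g_1,\dots,g_{j-1}$, the vector $g_j$ is uniform on the unit sphere of the orthogonal complement. Each incremental factor $\|M[g_1,\dots,g_j]\|_{(j)}^2 / \|M[g_1,\dots,g_{j-1}]\|_{(j-1)}^2$ equals $\|P_j^\perp M g_j\|^2$ where $P_j$ is the orthogonal projection onto $M\operatorname{span}(g_1,\dots,g_{j-1})$ (an at-most-$(j-1)$-dimensional space), up to normalization by how $M$ distorts the $g$-frame — one must be a little careful here, but the upshot is a product of $k$ terms, each of which is (the squared norm of) the component of $Mg_j$ orthogonal to a fixed $(j{-}1)$-dimensional subspace, with $g_j$ uniform on a sphere of dimension $\geq n-k+1$. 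For each such term one shows a small-ball bound of the form $\mathbb{P}(\|P_j^\perp M g_j\|^2 \leq \varepsilon^2 \cdot \frac{1}{n}\cdot(\text{appropriate scale})) \leq C\sqrt{\varepsilon}$, using that $Mg_j$ has a component along the top singular direction that is itself spread out (a uniform spherical coordinate has a bounded density near any hyperplane, giving the $\sqrt{\varepsilon}$ via $\mathbb{P}(|Z| \leq t) \lesssim t$ for $Z$ a coordinate and then adjusting exponents), and then taking a union bound or multiplying the $k$ conditional estimates. Collecting the $k$ factors and taking $k$-th roots produces the scale $\sqrt{k/n}$ and the exponent $k/2$ in $(c\varepsilon)^{k/2}$.

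I expect the main obstacle to be bookkeeping the normalization correctly: the naive telescoping of $\|M[g_1,\dots,g_j]\|_{(j)}$ does not factor as a clean product of orthogonal-component norms because $M$ does not preserve orthogonality of the $g_i$, so one needs the identity $\|MG\|_{(k)}^2 = \det(G^TM^TMG)$ together with the Haar conditional decomposition, and must track that the "effective density" seen by each $g_j$ against the relevant hyperplane is bounded below in a way that does not degrade with $j$ or with the conditioning. A clean way to handle this is to process the columns in the order $g_k, g_{k-1}, \dots, g_1$ and at each step use that the relevant quantity is a ratio of two Gram determinants differing by one row/column, invoke the Schur-complement formula to identify it as $\operatorname{dist}(Mg_j, \cdot)^2$, and then apply the spherical small-ball estimate conditionally — the independence/conditioning structure of Haar measure makes the $k$ bounds multiply, which is exactly what yields the $k/2$ in the exponent. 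The rest (choosing constants, verifying $\|MG\|_{(k)} \le \|M\|_{(k)}$ deterministically so the event is well-posed) is routine.
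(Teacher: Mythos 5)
Your proposal contains the right raw material but misses the one step that makes the paper's proof a one-liner, and the route you actually pursue has a genuine gap. Note that your Cauchy--Binet expansion already does the essential work: keeping only the single term $I=\{1,\dots,k\}$ gives, after diagonalizing $M$, the deterministic bound $\|MG\|_{(k)}/\|M\|_{(k)} \geq |\det(G_{[k],:})|$; equivalently (without diagonalizing), $\|MG\|_{(k)} \geq |\det(L^T M G)| = \|M\|_{(k)}\,|\det(R^T G)|$, where $L,R\in\fr_{n,k}$ are the top-$k$ left and right singular frames of $M$. This removes $M$ from the problem entirely, and all that remains is a small-ball estimate for $|\det(R^TG)|^{1/k}$ with $R$ fixed and $G$ Haar, which the paper simply cites from prior work (and which your conditional/telescoping scheme would prove cleanly in that $M$-free setting, since each factor is then a beta-type projection of a uniform spherical vector onto a subspace of dimension at least $n-2k$). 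Instead, you abandon this reduction and try to push the telescoping identity $\|MG\|_{(k)}^2=\prod_{j\le k}\mathrm{dist}\bigl(Mg_j,\ \mathrm{span}(Mg_1,\dots,Mg_{j-1})\bigr)^2$ (which is exact, no correction needed) through with $M$ kept inside. There the "appropriate scale" you defer is not bookkeeping but the crux: conditionally on $g_{<j}$, the relevant operator is the compression of $M$ to $\mathrm{span}(g_{<j})^\perp$ followed by projection off the $(j-1)$-dimensional space $M\,\mathrm{span}(g_{<j})$, and interlacing only guarantees its singular values at the level of $s_{2j-1}(M)$ and below. Since the singular values of $M$ may decay arbitrarily fast, the product of such per-step scales need not recover $\prod_{i\le k}s_i(M)=\|M\|_{(k)}$, so the comparison the lemma asserts does not follow from the per-step bounds you describe.

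A secondary gap is the aggregation of the $k$ conditional estimates. A union bound over $k$ steps, each failing with probability $C\sqrt{\varepsilon}$, yields $kC\sqrt{\varepsilon}$, not $(c\varepsilon)^{k/2}$; and "multiplying the conditional estimates" is not by itself valid, because the event that the product of the $k$ factors is small is not the intersection of the events that each factor is small. The standard fix (used elsewhere in the paper, cf.\ \Cref{lem: law of total exp in stoch proceses} and \Cref{lem:sq_det_small}) is to bound a conditional negative moment of each factor and then apply Markov/Chernoff to the product, which is also how the cited result behind the paper's proof obtains the exponent $k/2$.
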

\begin{proof}
    Note that there exists $L, R\in\fr_{n, k}$ such that:
    $$L^TM = \operatorname{diag}(\{s_i(M)\}_{i=1}^k)R^T$$
    so
    $$\|MG\|_{(k)}\geq|\det(L^TMG)| = \|M\|_{(k)}|\det(R^TG)|$$
    and hence 
    $$
    \frac{\|MG\|_{(k)}}{\|M\|_{(k)}}\geq |\det(R^TG)|
    $$
    where $R$ is a fixed frame. 
    The rest follows from Corollary 9.4 and equation (9.1) in \cite{hanin2021non}.
\end{proof}

\paragraph{Sub-multiplicativity for products of top singular values.}
We will use the following result, which shows that the $\|\cdot \|_{(k)}$ norm is sub-multiplicative.
\begin{lemma}[See also \cite{gelfand1950unitary}]\label{lem: sub mult k norm}
    For any two matrices $A, B\in\R^{n\times n}$, one has:
    \begin{equation}\label{eqn: sub prod k norm}
    \|AB\|_{(k)}\leq \|A\|_{(k)}\|B\|_{(k)}
\end{equation}
\end{lemma}
\begin{proof}
For any two matrices $A, B\in\R^{n\times n}$ and frames $L,  R\in\fr_{n, k}$ such that $\det(L^TABR) = \|AB\|_{(k)}$, the standard SVD gives: $L^TA = \Sigma^{(L)}_{k\times k}R_1^T$ and $BR = L_1\Sigma^{(R)}_{k\times k}$ where $L_1, R_1\in\fr_{n, k}$ so:
$$\|AB\|_{(k)}=\det(L^TABR)=\prod_{i}(\Sigma^{(L)}_{k\times k})_i(\Sigma^{(R)}_{k\times k})_i\cdot \det(R_1^TL_1)\leq \prod_{i}(\Sigma^{(L)}_{k\times k})_i(\Sigma^{(R)}_{k\times k})_i\leq \|A\|_{(k)}\|B\|_{(k)}$$which concludes our claim.    
\end{proof}
\paragraph{Useful results on small-ball probability.}
To control the small-ball density of projections, we will use the following result. In fact, this is the \emph{only} place in which we needed bounded Lebesgue density in \Cref{cond:main_mu}.
\begin{lemma}[Theorem 1.1 of \cite{rudelson2015small}]\label{lem:small_ball_proj_rv}
   Let $X=\left(X_1, \ldots, X_n\right)$ where $X_i$ are real-valued independent random variables. Assume that the densities of $X_i$ are bounded by $K$ almost everywhere. Let $P\in\fr_{n, d}$ be an orthogonal projection from $\mathbb{R}^n$ onto $\R^d$. Then the density of the random vector $P^TX$ is bounded by $(C K)^d$ almost everywhere, where $C$ is a positive absolute constant. Furthermore, when $d=1$ and $P$ is a vector with norm 1, the max density of $\langle P,X \rangle$ is at most $\sqrt{2}K$.
\end{lemma}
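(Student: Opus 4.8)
\noindent\emph{Proof proposal.} The plan is to write the density of $P^TX$ as a product of one‑dimensional densities integrated over $\mathrm{range}(P)^\perp$, and then invoke the geometric Brascamp--Lieb inequality. First I would replace $X_i$ by $KX_i$ to reduce to $K=1$, i.e.\ each $X_i$ has density $\le 1$ and the goal is a bound $C^d$. Completing $P\in\fr_{n,d}$ to an orthogonal matrix and integrating out the complementary $n-d$ coordinates (a linear change of variables with unit Jacobian) yields, with $V:=\mathrm{range}(P)^\perp$ of codimension $d$, $\sigma_V$ the Lebesgue measure on $V$, and $\pi_V$ the orthogonal projection onto $V$,
\[
f_{P^TX}(y)=\int_{V}\prod_{i=1}^n f_{X_i}\!\big((Py)_i+\langle\pi_Ve_i,v\rangle\big)\,\d\sigma_V(v),
\]
since $\langle e_i,v\rangle=\langle\pi_Ve_i,v\rangle$ for $v\in V$.

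The crucial point is that the $\pi_Ve_i$ are in ``geometric position'': as $\sum_i e_ie_i^T=I_n$ and $\pi_V$ is an orthogonal projection, $\sum_i(\pi_Ve_i)(\pi_Ve_i)^T=\pi_V=\mathrm{id}_V$, so setting $\gamma_i:=\|\pi_Ve_i\|^2\in[0,1]$ and $\hat e_i:=\pi_Ve_i/\sqrt{\gamma_i}$ (the finitely many indices with $\gamma_i=0$ contribute only a factor $\le1$ and are discarded) we get $\sum_i\gamma_i\hat e_i\hat e_i^T=\mathrm{id}_V$. With $\psi_i(t):=f_{X_i}((Py)_i+\sqrt{\gamma_i}\,t)$, so that $\|\psi_i\|_\infty\le1$ and $\int\psi_i=\gamma_i^{-1/2}$, and $\psi_i=h_i^{\gamma_i}$ with $h_i:=\psi_i^{1/\gamma_i}$, the geometric Brascamp--Lieb inequality gives
\[
f_{P^TX}(y)=\int_V\prod_i h_i(\langle\hat e_i,v\rangle)^{\gamma_i}\,\d\sigma_V(v)\ \le\ \prod_i\Big(\int_{\R}h_i\Big)^{\gamma_i}=\prod_i\Big(\int\psi_i^{1/\gamma_i}\Big)^{\gamma_i}.
\]
Since $1/\gamma_i\ge1$, interpolation gives $\int\psi_i^{1/\gamma_i}\le\|\psi_i\|_\infty^{1/\gamma_i-1}\int\psi_i\le\gamma_i^{-1/2}$ --- this is the one step using the bounded‑density hypothesis --- so $f_{P^TX}(y)\le\prod_i\gamma_i^{-\gamma_i/2}=\exp\!\big(-\tfrac12\sum_i\gamma_i\ln\gamma_i\big)$.

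To conclude, note $\sum_i\gamma_i=\mathrm{tr}\,\pi_V=n-d$; by convexity of $t\mapsto t\ln t$ (Jensen over the indices, those with $\gamma_i=0$ contributing $0$) and $-\ln(1-x)\le x/(1-x)$ at $x=d/n$,
\[
-\tfrac12\sum_i\gamma_i\ln\gamma_i\ \le\ \tfrac{n-d}{2}\ln\tfrac{n}{n-d}\ \le\ \tfrac d2,
\]
so $f_{P^TX}\le e^{d/2}$ a.e.; undoing the rescaling, $\|f_{P^TX}\|_\infty\le(\sqrt e\,K)^d$, which is the claim with $C=\sqrt e$. (Carrying $K$ through from the start, the bound is $K^d\exp(-\tfrac12\sum_i\gamma_i\ln\gamma_i)\le(\sqrt e\,K)^d$.)

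I expect the main obstacle to be conceptual rather than computational: a naive induction on $d$ is hopeless, because peeling off one direction of $\mathrm{range}(P)$ turns $f_{P^TX}$ into a \emph{conditional} density of $X$, which no longer factors into bounded densities; one must integrate all of $\mathrm{range}(P)$ out at once, and it is Brascamp--Lieb that controls the resulting subspace integral. Two remarks. The sharp constant $\sqrt2$ for $d=1$ is not produced by this argument (the extremal value of $\prod_i(1-a_i^2)^{-(1-a_i^2)/2}$ is $\sqrt e$, approached by a maximally spread unit vector $a$); for that I would argue directly --- if $\max_j|a_j|\ge1/\sqrt2$ then $\|f_{\langle a,X\rangle}\|_\infty\le\|f_{a_jX_j}\|_\infty\le K/|a_j|\le\sqrt2\,K$, since convolution with a probability measure does not increase $\|\cdot\|_\infty$, and handle the remaining spread case more carefully (the extremizer being $\tfrac1{\sqrt2}(U+U')$ with $U,U'$ i.i.d.\ uniform). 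In any event the present paper only requires the bound for some absolute $C$.
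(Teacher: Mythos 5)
The paper never proves this lemma---it is imported verbatim as Theorem~1.1 of \cite{rudelson2015small}---so there is no internal proof to compare against; your proposal is a self-contained derivation, and it follows the same Brascamp--Lieb route as the cited source. For the main claim your argument is correct: the rescaling to $K=1$; the representation of the density of $P^TX$ as an integral over $V=\mathrm{range}(P)^\perp$ of $\prod_i f_{X_i}\big((Py)_i+\langle \pi_V e_i,v\rangle\big)$ with unit Jacobian; the observation that $\sum_i(\pi_Ve_i)(\pi_Ve_i)^T=\mathrm{id}_V$ puts the directions in geometric position with weights $\gamma_i=\|\pi_Ve_i\|^2$ summing to $\operatorname{tr}\pi_V=n-d$ (indices with $\gamma_i=0$ contribute factors $\le 1$ and may indeed be dropped); the application of Ball's geometric Brascamp--Lieb inequality to $h_i=\psi_i^{1/\gamma_i}$; the interpolation $\int\psi_i^{1/\gamma_i}\le\|\psi_i\|_\infty^{1/\gamma_i-1}\int\psi_i\le\gamma_i^{-1/2}$ (the only use of the bounded-density hypothesis); and the entropy estimate $-\tfrac12\sum_i\gamma_i\ln\gamma_i\le\tfrac{n-d}{2}\ln\tfrac{n}{n-d}\le\tfrac d2$ via Jensen and $-\ln(1-x)\le x/(1-x)$. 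This yields $\|f_{P^TX}\|_\infty\le(\sqrt e\,K)^d$, which proves the first assertion with $C=\sqrt e$ (the degenerate case $d=n$ being trivial).

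The one piece you do not establish is the ``furthermore'' clause, i.e.\ the constant $\sqrt2\,K$ when $d=1$: your main argument only gives $\sqrt e\,K$, and your direct argument covers just the case $\max_j|a_j|\ge 1/\sqrt2$, leaving the spread case open. That remaining case is genuinely hard---the sharp constant is Rogozin's theorem combined with Ball's cube-slicing theorem, which is also where \cite{rudelson2015small} gets it---so as written the $\sqrt2$ statement stays a citation rather than a proof. You are right, however, that this is immaterial for the present paper: the $d=1$ bound is used only in the proof of Lemma~\ref{lem: det X+M} (and through \Cref{lem:small_ball_proj_applied} only via the $(CK)^d$ form), where any absolute constant in place of $\sqrt2$ merely changes the explicit constants downstream.
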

As a corollary, we can show that
\begin{lemma}\label{lem:small_ball_proj_applied}
     Let $X=\left(X_1, \ldots, X_n\right)$ where $X_i$ are real-valued independent random variables. Assume that the densities of $X_i$ are bounded by $K$ almost everywhere. Let $P\in\fr_{n, d}$ be a orthogonal projection in $\mathbb{R}^n$ onto $\R^d$. Then
     $$\P\left(\|PX\|_2^2\leq d\cdot s\right)\leq (C_1K\sqrt{s})^d$$
     where $C_1$ is an absolute constant.
\end{lemma}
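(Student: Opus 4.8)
The plan is to deduce the estimate directly from the density bound of \Cref{lem:small_ball_proj_rv}, integrated over a small Euclidean ball. Set $Y := P^{T}X \in \R^{d}$, the vector written $PX$ in the statement; since $P^{T}P = \I_{d}$, one has $\|PX\|_{2} = \|Y\|_{2}$, so it suffices to bound $\P(\|Y\|_{2}^{2} \le d s)$.

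First I would invoke \Cref{lem:small_ball_proj_rv}. The $X_{i}$ are independent with densities bounded by $K$, so $Y = P^{T}X$ admits a density $f_{Y}$ on $\R^{d}$ with $\|f_{Y}\|_{\infty} \le (CK)^{d}$ for an absolute constant $C$. Integrating this pointwise bound over the event $\{\|Y\|_{2} \le \sqrt{ds}\}$ then gives
\[
\P\big(\|Y\|_{2}^{2} \le d s\big) = \int_{\|y\|_{2} \le \sqrt{ds}} f_{Y}(y)\,\d y \;\le\; (CK)^{d}\cdot \vol\big(B^{d}(0,\sqrt{ds})\big),
\]
where $B^{d}(0,r)$ denotes the Euclidean ball of radius $r$ in $\R^{d}$, whose volume equals $\pi^{d/2} r^{d}/\Gamma(d/2+1)$.

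It remains to absorb the dimensional factor. Using the elementary Stirling-type bound $\Gamma(d/2+1) \ge (d/(2e))^{d/2}$, one has
\[
\vol\big(B^{d}(0,\sqrt{ds})\big) = \frac{\pi^{d/2}(ds)^{d/2}}{\Gamma(d/2+1)} \le (2e\pi s)^{d/2},
\]
and hence $\P(\|Y\|_{2}^{2} \le d s) \le (CK)^{d}(2e\pi s)^{d/2} = \big(C\sqrt{2e\pi}\,K\sqrt{s}\big)^{d}$, which is the claim with $C_{1} := C\sqrt{2e\pi}$.

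The only point requiring a little care is that the $(ds)^{d/2}$ factor coming from the volume of $B^{d}(0,\sqrt{ds})$ must cancel against the $\Gamma(d/2+1) \sim (d/(2e))^{d/2}$ in the denominator; Stirling's inequality does precisely this and leaves behind only an absolute constant raised to the $d$-th power, which is what makes the stated bound have the clean form $(C_{1} K\sqrt{s})^{d}$. Everything else is a routine application of \Cref{lem:small_ball_proj_rv}.
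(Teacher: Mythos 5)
Your proof is correct and follows essentially the same route as the paper: bound the density of $P^TX$ by $(CK)^d$ via \Cref{lem:small_ball_proj_rv} and integrate over the ball of radius $\sqrt{ds}$, whose volume is at most $(C's^{1/2})^d$ after the $d^{d/2}$ factor cancels against $\Gamma(d/2+1)$. The paper states the volume bound without the explicit Stirling computation, which you supply; otherwise the arguments are identical.
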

\begin{proof}
Consider the Lebesgue measure on a $d$-dimensional ball with radius $\sqrt{ds}$. Its volume $V_d(\sqrt{ds})$ satisfies:
$$V_d(\sqrt{d s})\leq \left(C s^{1 / 2}\right)^d$$
where $C$ is a universal constant independent of $d$. Hence, via \Cref{lem:small_ball_proj_rv} we get:
$$\P\left(\|P^TX\|_2^2\leq d\cdot s\right)\leq\left(C_d s^{1 / 2}\right)^d\cdot (CK)^d \leq (\tilde C K\sqrt{s})^d$$
for some universal $\tilde C$.
\end{proof}
\paragraph{Useful results on sub-exponential random variables.} We collect here some simple results on sub-exponential random variables. We begin with an elementary result:
\begin{lemma}\label{lem: exp_mean_upp}
    If a random variable $X$ with constants $c_1>1$ and $c_2>0$ such that:
    $$\P(|X|\geq t)\leq c_1\exp(-t/c_2)$$
    for all $t>0$
    then $\P(|X|\geq t)\leq 2\exp[-t/(2c_1c_2)]$ for all $t>0$.
\end{lemma}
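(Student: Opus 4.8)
The statement is a routine "self-improvement" of a subexponential tail bound: starting from $\P(|X|\ge t)\le c_1\exp(-t/c_2)$ with $c_1>1$, we want to replace the leading constant $c_1$ by $2$ at the cost of doubling the scale in the exponent (and picking up the factor $c_1$ there). The plan is to split on whether $t$ is small or large relative to the crossover point where the bound $c_1\exp(-t/c_2)$ drops below $2\exp(-t/(2c_1c_2))$.

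First I would locate the crossover. We want $c_1 e^{-t/c_2}\le 2e^{-t/(2c_1c_2)}$, i.e. $\log(c_1/2)\le t/c_2 - t/(2c_1c_2) = \frac{t}{c_2}\bigl(1-\frac{1}{2c_1}\bigr)$. Since $c_1>1$ we have $1-\frac{1}{2c_1}>\frac12$, so this holds as soon as $t\ge 2c_2\log(c_1/2)$ (using $\log(c_1/2)<\log c_1$, and being a little generous). Call this threshold $t_0$. For $t\ge t_0$, the hypothesis directly gives $\P(|X|\ge t)\le c_1 e^{-t/c_2}\le 2e^{-t/(2c_1c_2)}$, which is exactly what we want. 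For $t<t_0$, the target bound $2\exp(-t/(2c_1c_2))$ is itself at least $2\exp(-t_0/(2c_1c_2)) = 2\exp(-\log(c_1/2)/c_1) = 2(2/c_1)^{1/c_1}$, and I would check that this quantity is $\ge 1$ — equivalently $(c_1/2)^{1/c_1}\le 2$, i.e. $(1/c_1)\log(c_1/2)\le\log 2$, which holds for all $c_1>1$ since the left side is bounded by $\sup_{x>1}\frac{\log(x/2)}{x}$, a small constant well below $\log 2$. Hence for $t<t_0$ the target bound exceeds $1$ and holds trivially since $\P(|X|\ge t)\le 1$.

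Combining the two ranges gives $\P(|X|\ge t)\le 2\exp(-t/(2c_1c_2))$ for all $t>0$, as claimed. I expect no real obstacle here; the only mild care needed is in the small-$t$ case, where one must verify the elementary inequality $(c_1/2)^{1/c_1}\le 2$ for $c_1>1$ so that the trivial bound $\P(\cdot)\le 1$ suffices — this is where the hypothesis $c_1>1$ (rather than $c_1>0$) gets used, since it keeps $\log(c_1/2)$ from being too large relative to $c_1$. One could also absorb constants more crudely, but the split-at-the-crossover argument is the cleanest and gives precisely the stated constants.
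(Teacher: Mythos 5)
Your proposal is correct and is essentially the paper's own argument: the paper simply asserts the pointwise inequality $c_1\exp(-t/c_2)\land 1\leq 2\exp[-t/(2c_1c_2)]\land 1$, i.e. use the hypothesis for $t$ beyond the crossover and the trivial bound $\P(|X|\geq t)\leq 1$ below it, which is exactly the case split you carry out (and your verification that $(c_1/2)^{1/c_1}\leq 2$ for $c_1>1$ correctly fills in the small-$t$ check the paper leaves implicit).
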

\begin{proof}
Since probabilities are always bounded above by 1, 
this can be easily verified via checking:
    $$
    c_1\exp(-t/c_2) \land 1\leq 2\exp[-t/(2c_1c_2)]\land 1
    $$
    for all $c_1>1, c_2>0, t\geq 0$.
\end{proof}

We now recall the usual equivalent definitions of a sub-exponential random variable. 

\begin{lemma}[Sub-exponential properties,  Proposition 2.7.1 in \cite{VershyninHDPBook}]\label{lem: HDP SubE}
Let $X$ be a random variable. Then the following properties are equivalent: the parameters $K_i>0$ appearing in these properties differ from each other by at most an absolute constant factor.
\begin{enumerate}
\item The tails of $X$ satisfy
$$
\mathbb{P}\{|X| \geq t\} \leq 2 \exp \left(-t / K_1\right) \quad \text { for all } t \geq 0
$$
\item The moments of $X$ satisfy
$$
\|X\|_{L^p}=\left(\mathbb{E}|X|^p\right)^{1 / p} \leq K_2 p \quad \text { for all } p \geq 1
$$
\item 
    The MGF of $|X|$ satisfies
$$
\mathbb{E} \exp (\lambda|X|) \leq \exp \left(K_3 \lambda\right) \quad \text { for all } \lambda \text { such that } 0 \leq \lambda \leq \frac{1}{K_3}
$$
\item The MGF of $|X|$ is bounded at some point, namely
$$
\mathbb{E} \exp \left(|X| / K_4\right) \leq 2
$$
\end{enumerate}
Moreover, if $\mathbb{E} [X]=0$ then previous properties are also equivalent to the following one: the MGF of $X$ satisfies
$$\mathbb{E} \exp (\lambda X) \leq \exp \left(K_5^2 \lambda^2\right)$$ for all $\lambda$ such that $|\lambda| \leq \frac{1}{K_5}$.
\end{lemma}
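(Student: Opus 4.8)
This lemma is Proposition 2.7.1 of \cite{VershyninHDPBook}, so the most economical ``proof'' is a citation; for completeness, the plan is to sketch the standard argument, which runs through the cycle of implications $(1)\Rightarrow(2)\Rightarrow(3)\Rightarrow(4)\Rightarrow(1)$ and then treats the mean-zero statement separately. For $(1)\Rightarrow(2)$ I would apply the layer-cake identity $\E|X|^p=\int_0^\infty pt^{p-1}\P(|X|\ge t)\,\d t$, insert the tail bound, and recognize the integral as $2K_1^p\,\Gamma(p+1)=2K_1^p\,p!$, so that $\|X\|_{L^p}\le(2\,p!)^{1/p}K_1\le 2K_1 p$ using $p!\le p^p$. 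For $(2)\Rightarrow(3)$ I would expand $\E\exp(\lambda|X|)=\sum_{p\ge0}\lambda^p\E|X|^p/p!$, bound the $p$-th term by $(eK_2\lambda)^p$ using $\E|X|^p\le(K_2p)^p$ together with $p!\ge(p/e)^p$, and sum the geometric series: on $\lambda\le 1/(2eK_2)$ this yields $\E\exp(\lambda|X|)\le(1-eK_2\lambda)^{-1}\le\exp(2eK_2\lambda)$, so $K_3=2eK_2$ works.

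The remaining two implications are immediate: $(3)\Rightarrow(4)$ follows by evaluating the bound at $\lambda=(\ln2)/K_3\le 1/K_3$, and $(4)\Rightarrow(1)$ is Markov's inequality applied to $\exp(|X|/K_4)$. For the mean-zero addendum, assume (say) property $(2)$ together with $\E X=0$; then the linear term in $\E\exp(\lambda X)=1+\lambda\E X+\sum_{p\ge2}\lambda^p\E X^p/p!$ vanishes, and bounding $|\E X^p|\le(K_2 p)^p$ and summing the geometric series from $p=2$ gives, on $|\lambda|\le1/(2eK_2)$, the estimate $\E\exp(\lambda X)\le1+2(eK_2\lambda)^2\le\exp(2e^2K_2^2\lambda^2)$; taking $K_5=2eK_2$ makes this $\le\exp(K_5^2\lambda^2)$ precisely on $|\lambda|\le1/K_5$. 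Conversely, given the quadratic MGF bound (which also applies to $-X$), a Chernoff optimization — $\P(X\ge t)\le\exp(-\lambda t+K_5^2\lambda^2)$ with $\lambda=t/(2K_5^2)$ when $t\le2K_5$ and $\lambda=1/K_5$ otherwise — recovers $(1)$ with $K_1$ an absolute multiple of $K_5$, after a harmless clean-up of prefactors (e.g.\ via \Cref{lem: exp_mean_upp}).

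The only real work is the bookkeeping: one must check at each step that the new parameter is an absolute constant times the old one, which is exactly what legitimizes the ``differ by at most an absolute constant factor'' clause. The one genuine subtlety is in the mean-zero direction, where the bound $\E\exp(\lambda X)\le\exp(K_5^2\lambda^2)$ is false for large $|\lambda|$: it is therefore essential that the geometric-series truncation at $p=2$ be performed only on the interval $|\lambda|\le1/K_5$, and correspondingly that the Chernoff argument split into a sub-Gaussian regime for small $t$ and a sub-exponential tail for large $t$.
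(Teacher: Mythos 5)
Your proposal is correct: the paper itself gives no proof of this lemma beyond citing Proposition 2.7.1 of \cite{VershyninHDPBook}, and your sketch is precisely the standard argument from that source — the implication cycle $(1)\Rightarrow(2)\Rightarrow(3)\Rightarrow(4)\Rightarrow(1)$ via layer-cake, Taylor expansion of the MGF, and Markov, plus the mean-zero MGF equivalence with the correct restriction to $|\lambda|\leq 1/K_5$ and the split Chernoff optimization, with constants tracked consistently. Nothing to add or compare, since the paper delegates the proof entirely to the citation.
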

\begin{lemma}[Sub-exp properties for almost centered random variables]\label{lem: Sub-exp properties for almost centered random variables}
    There exists a universal $c_0>0$, such that the following holds. 
    If a random variable $Z$ satisfies for some constants $c_1$:
    $$\P(|Z|\geq t)\leq 2\exp(-t/c_1),$$
    then
    $$
\mathbb{E}\left[e^{\lambda Z} \right] \leq \exp \left\{2\left(c_0 c_1\right)^2 \lambda^2\right\}, \quad\quad |\lambda| \in\left[\frac{2}{c_0^2c_1}, \frac{1}{c_0c_1}\right].
$$
\end{lemma}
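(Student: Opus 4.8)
The plan is to deduce everything from a one-sided, \emph{linear-in-}$\lambda$ estimate on $\E[e^{\mu|Z|}]$ for $\mu\ge 0$, exploiting the fact that in the statement $\lambda$ is confined to an annulus bounded away from $0$. On such a set a bound of the form $e^{C c_1|\lambda|}$ automatically upgrades to one of the form $e^{C'(c_1\lambda)^2}$ once $|\lambda|\gtrsim 1/(c_1)$, so no centering hypothesis on $Z$ is actually needed; this is the whole point of restricting $\lambda$ away from the origin. Concretely, for any real $\lambda$ we have $e^{\lambda Z}\le e^{|\lambda|\,|Z|}$ pointwise (it is $1\le e^{|\lambda||Z|}$ when $\lambda Z<0$ and an equality otherwise), hence $\E[e^{\lambda Z}]\le \E[e^{|\lambda|\,|Z|}]$, and it suffices to control the MGF of the nonnegative variable $|Z|$.

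For that I would use the layer-cake identity $\E[e^{\mu|Z|}]=1+\mu\int_0^\infty e^{\mu t}\,\P(|Z|>t)\,dt$ (Tonelli), insert the hypothesis $\P(|Z|>t)\le 2e^{-t/c_1}$, and evaluate $\int_0^\infty e^{(\mu-1/c_1)t}\,dt=(1/c_1-\mu)^{-1}$, valid whenever $\mu c_1<1$. This gives $\E[e^{\mu|Z|}]\le 1+\tfrac{2\mu c_1}{1-\mu c_1}$, and restricting to $\mu c_1\le 1/2$ (so the denominator is $\ge 1/2$) yields $\E[e^{\mu|Z|}]\le 1+4\mu c_1\le e^{4\mu c_1}$. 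Thus $\E[e^{\lambda Z}]\le e^{4c_1|\lambda|}$ for every real $\lambda$ with $|\lambda|\le \tfrac{1}{2c_1}$.

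It remains to pick $c_0$ and compare exponents. Take $c_0=2$ (any $c_0\ge 2$ works, e.g. $c_0=4$ if one wants a non-degenerate interval). Then every $\lambda$ with $|\lambda|\in[\tfrac{2}{c_0^2c_1},\tfrac{1}{c_0c_1}]$ satisfies $|\lambda|\le\tfrac{1}{2c_1}$, so the previous bound applies; and the lower endpoint $|\lambda|\ge \tfrac{2}{c_0^2c_1}$ rearranges to exactly $4c_1|\lambda|\le 2(c_0c_1)^2\lambda^2$. Combining, $\E[e^{\lambda Z}]\le e^{4c_1|\lambda|}\le e^{2(c_0c_1)^2\lambda^2}$ on that interval, which is the claim.

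There is no real obstacle: the lemma is elementary. The only points needing a little care are (i) recognizing that because $\lambda$ lives in an annulus one needs only a linear (not quadratic) tail-MGF bound, which is why an uncentered $Z$ is fine; and (ii) bookkeeping the numerical constants so that the ``$2$'' in the stated exponent matches the ``$2$''/``$1$'' in the endpoints $2/(c_0^2c_1)$ and $1/(c_0c_1)$ — this is why the explicit split at $\mu c_1\le 1/2$ is preferable to a black-box appeal to the sub-exponential equivalences in \Cref{lem: HDP SubE}, which would only produce an unspecified absolute constant in place of $4$.
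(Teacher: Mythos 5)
Your proof is correct, and it takes a genuinely different and more elementary route than the paper's. The paper centers $Z$: it first bounds $|\E Z|\leq 2c_1$, invokes the sub-exponential equivalences of \Cref{lem: HDP SubE} to get moment growth $\lr{\E|Z-\E Z|^p}^{1/p}\lesssim c_1 p$, converts that into a centered quadratic MGF bound $\E e^{\lambda(Z-\E Z)}\leq e^{(\tilde c_0 c_1)^2\lambda^2}$, and only then uses the lower endpoint of the $\lambda$-range to absorb the stray linear term $\lambda\E Z$ into the quadratic. You skip the centering and the equivalence machinery entirely: the pointwise bound $e^{\lambda Z}\leq e^{|\lambda||Z|}$ plus the layer-cake identity turns the tail hypothesis directly into the linear bound $\E e^{\lambda Z}\leq e^{4c_1|\lambda|}$ for $|\lambda|\leq 1/(2c_1)$, and then the same structural trick (the annulus keeps $|\lambda|$ bounded below, so a linear exponent is dominated by the stated quadratic one, with $|\lambda|\geq 2/(c_0^2c_1)$ rearranging exactly to $4c_1|\lambda|\leq 2(c_0c_1)^2\lambda^2$) finishes the argument. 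The trade-offs: your route yields explicit, checkable constants (any $c_0\geq 2$ works, with $c_0=4$ giving a non-degenerate interval, which is all that \Cref{lem: mtg concentrate} needs since it only evaluates at the upper endpoint), whereas the paper's route, while passing through unspecified universal constants, produces along the way the genuinely quadratic (sub-Gaussian-type) bound for the centered variable on the whole small-$\lambda$ range, which is the more standard reusable fact; for the restricted annulus in the statement, your shortcut is cleaner.
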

\begin{proof}
First of all, denote $\mu = \E[Z]$ then
$$
|\mu|\leq \E[|Z|] = \int_{0}^{\infty}\P(|Z|\geq t)\d t\leq 
\int_{0}^{\infty}2\exp(-t/c_1)\d t = 2c_1.
$$
By \Cref{lem: HDP SubE}, for all $p=1,2,\dots,$
$$
\left(\E[|Z|^p]\right)^{1/p}\leq c_0c_1p
$$
for some fixed constant $c_0$. Thus for $p=1,2,\dots,$
$$
\left(\E[|Z-\mu|^p]\right)^{1/p}\leq \left(\E[(2|Z|\lor2|\mu|)^p]\right)^{1/p}\leq 2(c_0c_1p \lor |\mu|)\leq (2c_0\lor 2) c_1p
$$
and hence (again via \Cref{lem: HDP SubE}) for all $|\lambda|<(\tilde c_0c_1c_2)^{-1}$ one has:
$$
\E[e^{\lambda (Z-\mu)}]\leq \exp\{(\tilde c_0c_1)^2\lambda^2\}.
$$
or
$$
\E[e^{\lambda Z}]\leq \exp\{\lambda\mu +(\tilde c_0c_1)^2\lambda^2\}.
$$
Thus, so long as $|\lambda\mu|\leq (\tilde c_0c_1)^2\lambda^2$, or $|\lambda|\geq |\mu|/(\tilde c_0c_1)^2$, the mean deviation is dominated and we get the desired results for some universal constant $\tilde c_0$.
\end{proof}
\paragraph{Moment inequalities for almost-martingale stochastic processes}
\begin{lemma}[See also \cite{shamir2011variant}]
\label{lem: law of total exp in stoch proceses}
Suppose there is a stochastic process $\{Z_i\}_{i=1}^T$ along with a filtration $\{\mathcal{F}_i\}_{i=1}^T$ and that for all $i$ and some function $\{G_i(\cdot)\}$ one has:
$$
\E[G_{i+1}(Z_{i+1})|\mathcal{F}_i\}]\leq a_i \quad\quad\text{almost surely}
$$
for a sequence of positive numbers $\{a_i\}$, then for any fixed $T$:
$$\E\left[\prod_{i=1}^T G_i(Z_i)
    \right]\leq \prod_{i=1}^{T}a_i.$$
\end{lemma}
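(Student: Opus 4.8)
The plan is to prove the lemma by backward induction on $T$, peeling off the last factor at each step with the tower property of conditional expectation. The only structural facts needed beyond the stated hypothesis are that the process $\{Z_i\}$ is adapted to $\{\mathcal{F}_i\}$ — so that each partial product $\prod_{i=1}^{j}G_i(Z_i)$ is $\mathcal{F}_j$-measurable — and that the functions $G_i$ are nonnegative, which holds in all of our applications (there $G_i(Z_i)$ is a product of squared singular values or an exponential moment) and is what lets inequalities survive multiplication by a partial product.

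For the inductive step, conditioning on $\mathcal{F}_{T-1}$ and using that $\prod_{i=1}^{T-1}G_i(Z_i)$ is $\mathcal{F}_{T-1}$-measurable gives
\[
\E\Big[\prod_{i=1}^{T}G_i(Z_i)\Big]=\E\Big[\E\big[\prod_{i=1}^{T}G_i(Z_i)\,\big|\,\mathcal{F}_{T-1}\big]\Big]=\E\Big[\Big(\prod_{i=1}^{T-1}G_i(Z_i)\Big)\,\E\big[G_T(Z_T)\,\big|\,\mathcal{F}_{T-1}\big]\Big].
\]
By hypothesis $\E[G_T(Z_T)\mid\mathcal{F}_{T-1}]\le a_{T-1}$ almost surely, and since $\prod_{i=1}^{T-1}G_i(Z_i)\ge 0$ almost surely the right-hand side is at most $a_{T-1}\,\E[\prod_{i=1}^{T-1}G_i(Z_i)]$. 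This reduces the claim for $T$ to the same claim for $T-1$ with the identical constants, and iterating all the way down to $T=1$ — where the base case $\E[G_1(Z_1)]\le a_1$ is the first instance of the hypothesis, interpreting $\mathcal{F}_0$ as the trivial $\sigma$-algebra and relabeling the $a_i$'s to match the statement — yields $\E[\prod_{i=1}^{T}G_i(Z_i)]\le\prod_{i=1}^{T}a_i$.

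I do not expect any real obstacle here: the lemma is a routine consequence of the tower property, and the only points meriting a line of care are the measurability bookkeeping that permits pulling the partial product out of the conditional expectation and the nonnegativity of the $G_i$ used at that step. If one prefers to avoid the induction, the same computation can be packaged as the observation that $M_j\triangleq\prod_{i=1}^{j}G_i(Z_i)/\prod_{i=1}^{j}a_i$ is a nonnegative supermartingale with $\E[M_0]=1$, so that $\E[M_T]\le 1$, which is exactly the assertion.
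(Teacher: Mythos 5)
Your proof is correct and follows essentially the same route as the paper's: iterating the tower property to peel off the last factor using $\E[G_T(Z_T)\mid\mathcal{F}_{T-1}]\leq a_{T-1}$ and the nonnegativity and $\mathcal{F}_{T-1}$-measurability of the partial product. Your explicit attention to adaptedness, nonnegativity, and the index relabeling is a slight tightening of the paper's more informal computation, but it is the same argument.
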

\begin{proof}
We use the law of total expectations multiple times
    \begin{align*}
    \E\left[\prod_{i=1}^T G_i(Z_i)
    \right]
    &= \E\left[\E\left[\prod_{i=1}^T G_i(Z_i)
    \bmid
    \mathcal{F}_{T}\}\right]\right]
    \\
    & = \E\left[\E\left[G(Z_T)
    \bmid
    \mathcal{F}_{T-1}\}\right]\E\left[\prod_{i=1}^{T-1} G_i(Z_i)
    \bmid
    \mathcal{F}_{T-1}\}\right]\right]
    \\&\leq a_T\cdot \E\left[\E\left[\prod_{i=1}^{T-1} G_i(Z_i)
    \bmid
    \mathcal{F}_{T-1}\}\right]\right]
    \\
    \dots\quad &\leq\prod_{i=1}^{T}a_i.
\end{align*}
\end{proof}
\begin{lemma}\label{lem: mtg concentrate}
    There exists a universal $c_0$ such that the following holds. For stochastic process $\{Z_i\}$ on a filtration $\{\mathcal{F}_i\}$ where for all $i$, with probability one over $\mathcal{F}_{i-1}$, the next item satisfies:
    $$\P\left(|Z_i|\geq t \mid \mathcal{F}_{i-1}\right)\leq c_1\exp(-t/c_2)$$ for some  $c_1>1, c_2>0$. Then,
    for any  $\epsilon\geq 2(c_0c_1c_2)$, 
    \begin{equation}\label{eqn: mtg large off}
        \P\left(\left|\frac{1}{T} \sum_{t=1}^T Z_t\right|>\epsilon\right)\leq2\exp\{T\cdot (2-(c_0c_1c_2)^{-1}\epsilon)\}.
    \end{equation}
\end{lemma}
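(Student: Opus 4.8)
The plan is a textbook Chernoff argument in which the product structure of the moment generating function is supplied by \Cref{lem: law of total exp in stoch proceses} and the per-step bound by the sub-exponential equivalences of \Cref{lem: HDP SubE}. By a union bound it suffices to establish the one-sided estimate $\P\big(\tfrac1T\sum_{t=1}^T Z_t>\epsilon\big)\le \exp\{T(2-(c_0c_1c_2)^{-1}\epsilon)\}$: the lower-tail estimate follows by applying the same argument to the process $\{-Z_t\}$, whose conditional tails satisfy the identical hypothesis because the hypothesis is stated in terms of $|Z_t|$.

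First I would clean up the per-step tail bound. Since $\P(|Z_i|\ge t\mid\mathcal{F}_{i-1})\le c_1 e^{-t/c_2}$ holds almost surely, \Cref{lem: exp_mean_upp} applied conditionally on $\mathcal{F}_{i-1}$ gives $\P(|Z_i|\ge t\mid\mathcal{F}_{i-1})\le 2\exp\{-t/(2c_1c_2)\}$ almost surely, for all $t\ge0$. Then by the equivalence of properties (1) and (3) in \Cref{lem: HDP SubE} there is a universal constant $a$ so that, writing $K\triangleq 2a\,c_1c_2$,
\[
\E\big[e^{\lambda |Z_i|}\,\big|\,\mathcal{F}_{i-1}\big]\le e^{K\lambda}\qquad\text{for all }0\le\lambda\le 1/K,
\]
almost surely, and in particular $\E[e^{\lambda Z_i}\mid\mathcal{F}_{i-1}]\le e^{K\lambda}$ on the same range (one could equally run this step through the quadratic MGF bound of \Cref{lem: Sub-exp properties for almost centered random variables}, which produces a bound of the same shape).

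Next I would iterate over the $T$ steps. Fix $\lambda\in(0,1/K]$ and take $G_i(z)=e^{\lambda z}$ and $a_i=e^{K\lambda}$; the previous display is exactly the hypothesis $\E[G_{i+1}(Z_{i+1})\mid\mathcal{F}_i]\le a_i$ of \Cref{lem: law of total exp in stoch proceses}, so
\[
\E\Big[\exp\Big(\lambda\sum_{t=1}^T Z_t\Big)\Big]=\E\Big[\prod_{i=1}^T G_i(Z_i)\Big]\le e^{TK\lambda}.
\]
Markov's inequality then yields, for every $\lambda\in(0,1/K]$,
\[
\P\Big(\sum_{t=1}^T Z_t>T\epsilon\Big)\le e^{-\lambda T\epsilon}\,\E\Big[\exp\Big(\lambda\sum_{t=1}^T Z_t\Big)\Big]\le \exp\{T\lambda(K-\epsilon)\}.
\]
Choosing $\lambda=1/K$ gives $\exp\{T(1-\epsilon/K)\}$; recalling $K=2a\,c_1c_2$ and setting $c_0\triangleq 2a$ (enlarging $a$ if necessary so $c_0\ge1$) this is $\exp\{T(1-(c_0c_1c_2)^{-1}\epsilon)\}\le\exp\{T(2-(c_0c_1c_2)^{-1}\epsilon)\}$. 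Adding the upper- and lower-tail bounds gives the factor $2$, and the hypothesis $\epsilon\ge 2c_0c_1c_2$ is precisely what makes the exponent nonpositive, so the bound is nontrivial.

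The argument is routine; the only points needing care are (i) that all the sub-exponential equivalences are invoked \emph{conditionally}, so one should note that the bounds hold almost surely given $\mathcal{F}_{i-1}$ before feeding them into \Cref{lem: law of total exp in stoch proceses}, and (ii) tracking the admissible range $\lambda\in(0,1/K]$ so that the value plugged into the Chernoff step is legitimate, which is exactly where the lower bound $\epsilon\ge 2c_0c_1c_2$ is used.
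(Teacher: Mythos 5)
Your proof is correct and follows essentially the same route as the paper: a conditional Chernoff bound assembled via \Cref{lem: law of total exp in stoch proceses}, evaluated at $\lambda=1/(c_0c_1c_2)$, and symmetrized over $\pm Z_t$. The only (harmless) difference is the per-step estimate: you bound $\E[e^{\lambda Z_i}\mid\mathcal F_{i-1}]\le\E[e^{\lambda|Z_i|}\mid\mathcal F_{i-1}]\le e^{K\lambda}$ via the equivalence of properties (1) and (3) in \Cref{lem: HDP SubE}, whereas the paper uses the quadratic MGF bound of \Cref{lem: Sub-exp properties for almost centered random variables}; your variant sidesteps the centering discussion and the restricted $\lambda$-range and even yields exponent $1-\epsilon/(c_0c_1c_2)$ in place of $2-\epsilon/(c_0c_1c_2)$, so the claimed bound follows a fortiori (one small remark: the hypothesis $\epsilon\ge 2c_0c_1c_2$ is not actually needed for your Chernoff step, only to make the bound nontrivial, which is fine since the statement only asserts the inequality in that range).
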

\begin{proof}
We call \Cref{lem: exp_mean_upp} as well as \Cref{lem: Sub-exp properties for almost centered random variables} to get that some universal $c_0>0$:
$$\E[e^{\lambda Z_i}|\mathcal{F}_{i-1}]\leq \exp\{2(c_0c_1c_2)^2\lambda^2\}$$
for $|\lambda|=1/(c_0c_1c_2)$. Hence, by \Cref{lem: law of total exp in stoch proceses}, a Chernoff-type bound yields:
$$
\P\left(\frac{1}{T} \sum_{t=1}^T Z_t>\epsilon\right)=\P\left(e^{\lambda\sum_{t=1}^T Z_t}>e^{\lambda T \epsilon}\right) \leq e^{-\lambda T \epsilon} \mathbb{E}\left[e^{\lambda \sum_t Z_t}\right]
\leq \exp\{-\lambda T\epsilon + 2T(c_0c_1c_2)^2\lambda^2\}.
$$
Hence, take $\lambda = 1/(c_0c_1c_2)$ we get
$$
\P\left(\frac{1}{T} \sum_{t=1}^T Z_t>\epsilon\right)
\leq \exp\{-\lambda T\epsilon + 2T(c_0c_1c_2)^2\lambda^2\} = \exp\{T\cdot (2-(c_0c_1c_2)^{-1}\epsilon)\}.
$$
Repeating the arguments for $\{-Z_i\}$, we get the desired concentration.
\end{proof}
\paragraph{An inequality concerning the partial sums of $\log\frac{n-i+1}{n}$'s.}
We adapt two inequalities from prior literature that will be useful. Their proofs involve converting sums into respective integrals.
\begin{lemma}[Adapted from the proof of Lemma 12.1 in \cite{hanin2021non}]\label{lem: partial sums log calculus}
 Fix positive integers $q, m$ satisfying $4 \leq m \leq q$. If $q>m$, then
$$
\sum_{j=1}^{m} \log \left(\frac{1}{1-j / q}\right) \geq \frac{m^2}{2 q}.
$$
and (even for $q=m$)
$$
\sum_{j=1}^m \log \left(\frac{1}{1+j / q}\right) \leq-\frac{2m^2}{3 q}.
$$
\end{lemma}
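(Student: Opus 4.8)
The plan is to bound each sum below (resp.\ above) by a matching integral with an explicit antiderivative and then reduce to a one-variable calculus estimate; note that the two displays call for comparisons in opposite directions. For the first display a termwise estimate already suffices: from $1-y\le e^{-y}$ (valid for all real $y$) we get $\log\frac{1}{1-j/q}\ge\frac{j}{q}$ for each $j=1,\dots,m$, where $q>m$ guarantees $j/q\in[0,1)$; summing gives $\sum_{j=1}^{m}\log\frac{1}{1-j/q}\ge\frac1q\sum_{j=1}^m j=\frac{m(m+1)}{2q}\ge\frac{m^2}{2q}$. (Matching the ``converting sums into integrals'' remark, one can instead bound the sum below by $\int_0^m-\log(1-x/q)\,\d x=q\bigl[(1-s)\log(1-s)+s\bigr]$ with $s=m/q$, and check $(1-s)\log(1-s)+s\ge s^2/2$ on $[0,1)$ by noting that the difference and its first derivative vanish at $s=0$ while its second derivative $s/(1-s)$ is nonnegative.)

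For the second display the obvious termwise bound $\log(1+x)\le x$ has the wrong sign, so I would instead produce a \emph{lower} bound for $\sum_{j=1}^m\log(1+j/q)$. Since $x\mapsto\log(1+x/q)$ is increasing, $\sum_{j=1}^m\log(1+j/q)\ge\int_0^m\log(1+x/q)\,\d x$, and substituting $u=1+x/q$ shows the integral equals $q\bigl[(1+s)\log(1+s)-s\bigr]$ with $s=m/q\in(0,1]$. Therefore $\sum_{j=1}^m\log\frac{1}{1+j/q}=-\sum_{j=1}^m\log(1+j/q)\le-q\bigl[(1+s)\log(1+s)-s\bigr]$, and since $\frac{m^2}{q}=qs^2$ the claimed bound $-\frac{2m^2}{3q}$ reduces to the scalar inequality $(1+s)\log(1+s)-s\ge\frac23 s^2$ for $s\in(0,1]$. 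I would establish this by setting $\psi(s)=(1+s)\log(1+s)-s-\frac23 s^2$, checking $\psi(0)=\psi'(0)=0$, and analyzing the signs of $\psi'$ and $\psi''$ on $(0,1]$; the hypothesis $m\ge4$ in the statement plays only a minor cleanup role along this route.

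The first display is essentially immediate from $e^y\ge1+y$, so the real work is the second, and its scalar inequality is the main obstacle. The delicate point is that one cannot replace $\log(1+x)$ by its first-order Taylor polynomial: that would yield only the constant $\tfrac12$ (and, if one used the upper estimate $\log(1+x)\le x$, the wrong direction entirely), so one must carry the exact antiderivative of $\log(1+x/q)$ and then verify the resulting inequality with constant $\tfrac23$ uniformly over all of $(0,1]$ rather than merely in a neighborhood of $s=0$.
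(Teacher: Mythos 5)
Your treatment of the first display is correct: the termwise bound $-\log(1-j/q)\ge j/q$ (valid because $q>m$ keeps every $j/q$ in $[0,1)$) gives $\sum_{j=1}^m\log\frac{1}{1-j/q}\ge\frac{m(m+1)}{2q}\ge\frac{m^2}{2q}$, which is if anything cleaner than the sum-to-integral comparison the paper alludes to.

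The second display is where the proposal fails, and the failure is not repairable along your route. Your reduction is sound up to the last step: indeed $\sum_{j=1}^m\log(1+j/q)\ge q\bigl[(1+s)\log(1+s)-s\bigr]$ with $s=m/q$. But the scalar inequality $(1+s)\log(1+s)-s\ge\frac23 s^2$, which you defer to a sign analysis of $\psi'$ and $\psi''$, is \emph{false for every} $s>0$: the expansion $(1+s)\log(1+s)-s=\frac{s^2}{2}-\frac{s^3}{6}+\cdots$ shows it is below $\frac{s^2}{2}<\frac{2s^2}{3}$, and at $s=1$ the left side is $2\log 2-1\approx 0.386$ versus $2/3$ on the right. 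Carrying out your own plan would have exposed this: $\psi''(s)=\frac{1}{1+s}-\frac43<0$ for all $s\ge 0$, so $\psi'<0$ and $\psi<0$ on $(0,1]$ --- the inequality points the wrong way. More importantly, no proof can close the display as stated: since $\log(1+x)<x$, one has $\sum_{j=1}^m\log(1+j/q)<\frac{m(m+1)}{2q}\le\frac{2m^2}{3q}$ whenever $m\ge 3$, so the second inequality of the lemma is false throughout its assumed range $m\ge 4$, and the constant $2/3$ must be weakened. For instance, $\log(1+x)\ge x-\frac{x^2}{2}$ gives $\sum_{j=1}^m\log(1+j/q)\ge\frac{m(m+1)}{2q}-\frac{m(m+1)(2m+1)}{12q^2}\ge\frac{m^2}{3q}$, where the last step uses $q\ge m$ and reduces to $3m\ge 1$; the resulting bound $\sum_{j=1}^m\log\frac{1}{1+j/q}\le-\frac{m^2}{3q}$ is still strong enough for both places where \eqref{eqn: lemma 12.1 low} is invoked in the proof of \Cref{thm:sing_val_approx_unif} (the intermediate constants degrade from $2\eps$ to $\sqrt{6}\,\eps<3\eps$, and the requirement on $\eps$ becomes $\eps<1/6$, which is subsumed by $\eps<0.01$). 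The right move here is to flag the statement and propose the corrected constant rather than to attempt a proof of the display as written.
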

\section{Remainder of the proofs}\label{sec: remainder}
\subsection{Remaining proofs in Step 2}\label{sec: reduction proof} 
{
The key estimate result \eqref{eqn: small_ball_lya}, is follows immediately once we show that (recall \eqref{eq:wedge-norm-def} and \eqref{eq:XV-def})
    \begin{equation}\label{eqn: equiv format smallball}
        \P\left(-\log\frac{\|X_{N, n}U_0\|_{(k)}}{\|X_{N, n}\|_{(k)}} \geq ck\log (en/k)\right)\leq c_1(en/k)^{-c_2ck}
    \end{equation}
    for $c>c_3$.
}
We prove this estimate by first obtaining an upper bounding the inverse moment of the determinant of $T = \sqrt{n}\one_{[k]}WU$ (and thus also a small-ball estimate this determinant).
\begin{lemma}\label{lem:sq_det_small}
    For all $C_1\leq 1/10$, there exists a constant $C$ with the following property. For all $n\geq k$ and $U\in\fr_{n, k}$, $T = MU\in\R^{k\times k}$ where  $M\sim\mu^{\otimes k\times n}$ satisfies:  $$\mathbb{E}\left[\left(\left|\operatorname{det}\left(T\right)\right|^2 / k!\right)^{-C_1}\right]\leq \prod_{t=1}^k e^{C_2/t} \leq e^{C_2(1+\log k)},\qquad C_2= C K_\infty^{4C_1}M_4,$$
    where $K_\infty$ is the upper bound for the density of $\mu$ in \cref{cond:main_mu}.
\end{lemma}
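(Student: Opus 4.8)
The plan is to expand $\det(T)$ via the Gram--Schmidt ``base $\times$ height'' identity and then control a negative fractional moment of each height, one step at a time. Let $T_1,\dots,T_k\in\R^k$ be the rows of $T=MU$, so $T_i=M_iU$ with $M_1,\dots,M_k$ the i.i.d.\ rows of $M\sim\mu^{\otimes k\times n}$, and set $\mathcal F_i:=\sigma(M_1,\dots,M_i)$ and $r_i:=k-i+1$. By \Cref{lem:small_ball_proj_rv} applied to $U^TM_i^T$, each $T_i$ has a bounded density on $\R^k$, so the rows are a.s.\ linearly independent; with $h_i:=\operatorname{dist}\!\big(T_i,\operatorname{span}(T_1,\dots,T_{i-1})\big)$, the Gram--Schmidt factorization of $T$ gives $\abs{\det T}=\prod_{i=1}^k h_i>0$ a.s., and since $\prod_i r_i=k!$,
\[
\abs{\det T}^2/k!=\prod_{i=1}^k\big(h_i^2/r_i\big)\qquad\text{a.s.}
\]
Conditionally on $\mathcal F_{i-1}$ the span of $T_1,\dots,T_{i-1}$ is a.s.\ a fixed $(i-1)$-dimensional subspace of $\R^k$, so $h_i^2=\norm{\Pi_iT_i}^2$ where $\Pi_i$ is the $\mathcal F_{i-1}$-measurable rank-$r_i$ orthogonal projection onto its complement; writing $\Pi_i=Q_iQ_i^T$ with $Q_i\in\fr_{k,r_i}$ one checks $UQ_i\in\fr_{n,r_i}$ and $h_i^2=\norm{(UQ_i)^TM_i^T}^2$, i.e.\ $h_i^2$ is the squared norm of the coordinates of $M_i$ in a fixed $r_i$-dimensional subspace of $\R^n$.

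Thus the lemma reduces to the one-step estimate
\begin{equation}\label{eqn: star onestep}
\E\!\left[\Big(\tfrac1r\norm{P^Tv}^2\Big)^{-C_1}\right]\ \le\ \exp\!\big(C\,K_\infty^{4C_1}M_4/r\big),\qquad v\sim\mu^{\otimes n},
\end{equation}
valid for all $r\le n$, all $P\in\fr_{n,r}$, and all $0<C_1\le1/10$, with $C$ absolute. Indeed, applying \eqref{eqn: star onestep} conditionally on $\mathcal F_{i-1}$ with $P=UQ_i$ and $r=r_i$ gives $\E\big[(h_i^2/r_i)^{-C_1}\mid\mathcal F_{i-1}\big]\le e^{C_2/r_i}$ a.s.\ with $C_2:=CK_\infty^{4C_1}M_4$, and then iterated conditioning (\Cref{lem: law of total exp in stoch proceses}, with the process taken to be the history $(M_1,\dots,M_i)$) yields
\[
\E\big[(\abs{\det T}^2/k!)^{-C_1}\big]\ \le\ \prod_{i=1}^k e^{C_2/r_i}=\prod_{t=1}^k e^{C_2/t}\ \le\ e^{C_2(1+\log k)}.
\]

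It remains to establish \eqref{eqn: star onestep}, which is the heart of the matter. Write $Y:=\tfrac1r\norm{P^Tv}^2$; since the entries of $v$ have mean $0$, variance $1$ and fourth moment at most $M_4$, a direct computation gives $\E[Y]=1$ and $\var(Y)\le3M_4/r$, while \Cref{lem:small_ball_proj_applied} gives $\P(Y\le s)\le(cK_\infty\sqrt s)^r$ for all $s>0$ with $c$ absolute. Since $y\mapsto y^{-C_1}$ is convex, the gap above its tangent at $y=1$, $R(y):=y^{-C_1}-1+C_1(y-1)$, is nonnegative, and $\E[Y]=1$ yields
\[
\E[Y^{-C_1}]=1+\E[R(Y)]=1+\E[R(Y)\one_{Y\ge1/2}]+\E[R(Y)\one_{Y<1/2}].
\]
For $y\ge1/2$, Taylor's theorem bounds $R(y)\le c'(C_1)(y-1)^2$ with $c'(C_1)$ bounded for $C_1\le1/10$, so $\E[R(Y)\one_{Y\ge1/2}]\le c'(C_1)\var(Y)=O(M_4/r)$. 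For $y<1$ one has $R(y)\le y^{-C_1}$, and the layer-cake formula gives $\E[Y^{-C_1}\one_{Y<1/2}]=2^{C_1}\P(Y<1/2)+\int_{2^{C_1}}^{\infty}\P(Y<u^{-1/C_1})\,\d u$; substituting $s=u^{-1/C_1}\in(0,1/2)$ and bounding $\P(Y<s)$ by $(cK_\infty\sqrt s)^r$ on the deep tail $s\le(cK_\infty)^{-2}$ and by $4\var(Y)\le12M_4/r$ (Chebyshev, valid since $s\le1/2$) on the remaining moderate range, each of the three resulting contributions is $O(K_\infty^{4C_1}M_4/r)$ after consolidating the powers of $K_\infty$ using $M_4\ge1$ and the universal bound $K_\infty\ge(2\sqrt3)^{-1}$ valid for any unit-variance density (this consolidation is what produces the exponent $4C_1$). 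Summing, $\E[Y^{-C_1}]\le1+O(K_\infty^{4C_1}M_4/r)\le\exp(CK_\infty^{4C_1}M_4/r)$.

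The main obstacle is \eqref{eqn: star onestep}: controlling a negative fractional moment of a quadratic form in a vector with only a bounded density and four finite moments, \emph{with the sharp rate $1/r$} rather than the $1/\sqrt r$ a naive argument gives. The bounded density enters (through \Cref{lem:small_ball_proj_applied}) only to kill the deep lower tail of $Y$, and the fourth moment only to control its bulk; the device that recovers the $1/r$ rate is to subtract the tangent line $1-C_1(y-1)$ before integrating, exploiting $\E[Y]=1$ to cancel the $\Theta(1/\sqrt r)$-scale fluctuations of $Y$ about its mean. A cruder bound such as $Y^{-C_1}\one_{Y\ge1}\le\one_{Y\ge1}$ discards the negative linear contribution that effects this cancellation, and a $\Theta(1/\sqrt r)$ error per step would propagate through the product to $e^{\Theta(\sqrt k)}$ rather than the required $e^{O(\log k)}$. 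Carrying the $K_\infty$-dependence cleanly through the small-ball, Chebyshev and Taylor regimes is the remaining, purely technical, point.
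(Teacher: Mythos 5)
Your proof is correct and follows essentially the same route as the paper's: the Gram--Schmidt ``base $\times$ height'' factorization of $\det(T)$, reduction via iterated conditioning (\Cref{lem: law of total exp in stoch proceses}) to a one-step negative-moment bound for $\tfrac1r\|P^Tv\|^2$, and that bound proved by expanding around the mean using convexity so the fourth moment controls the bulk while \Cref{lem:small_ball_proj_applied} kills the deep lower tail. The only differences are cosmetic (splitting at $Y=1/2$ with a Chebyshev patch on the moderate range instead of the paper's split at $(CK_\infty)^{-4}$ with a global quadratic majorant, and bounding the variance by $3M_4/r$ rather than computing it exactly), so no further comparison is needed.
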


\begin{proof}
    For matrix $S$ and vector $v$ we define $\operatorname{dist}(v, S) \triangleq  \inf_{w\in\operatorname{span}(S)}\|v-w\|$. 
    Note that 
    $$\log|\det(T)|^2/k!= \sum_{i=1}^k \log\|\operatorname{dist}(T_i, T_{<i})\|^2-\log (k-i+1) \triangleq \sum_{i=1}^k\log (C^{(i)}/i),
    $$
 where for each fixed $i$ we've set
 $$C^{(i)} \triangleq \|\operatorname{dist}(T_j, T_{<j})\|^2,\qquad j = k-i+1.$$
Note that the rows $T_1,\ldots, T_k$ of $T$ are projections of the rows of $M_1,\ldots, M_k$ of $M$ onto the column space of $U$. Hence, we may also write 
\begin{equation}\label{eq:C-rep}
    C^{(i)}=\norm{\mathrm{dist}(M_j, T_{<j})}^2
\end{equation}
Note that $M_j$ is independent of $T_{<j}$. Hence, by \Cref{lem: law of total exp in stoch proceses}, the conclusion of \Cref{lem:sq_det_small} will follow once we show that there exists $C_2>0$ such that for all $t\geq 1$ (and frame $\Theta_t$):
 \begin{equation*}
     \E\left[(C^{(t)}/t)^{-C_1}\right]\leq 1+C_2/t\leq \exp\{C_2/t\}.
 \end{equation*}
To obtain this estimate, we will show that 
\begin{align}
    \label{eq:sq_det_small_goal-1}
    \E\left[(C^{(t)}/t)^{-C_1} {\bf 1}_{\{(C^{(t)} / t)^{-1} > (CK_\infty)^{4}\}}\right]&= O(1/t)\\
   \label{eq:sq_det_small_goal-2} \E\left[(C^{(t)}/t)^{-C_1} {\bf 1}_{\{(C^{(t)} / t)^{-1} <(CK_\infty)^{4}\}}\right]&= 1+O(1/t),
\end{align}
where $C$ is some universal constant $C$ such that $CK_\infty\geq 2$. To obtain \eqref{eq:sq_det_small_goal-1} and \eqref{eq:sq_det_small_goal-2}, we return to \eqref{eq:C-rep} and denote by $\Theta_{i}\in \fr_{n,i}$ a frame consisting an orthonormal basis for the orthogonal complement to $T_{<j}$. We then have
\begin{equation}\label{eq:C-iid-rep}
C^{(i)}=_d\norm{\Theta_i^T u}^2,\qquad u = M_i\text{ is the }i-\text{th row of }M,    
\end{equation}
where $\Theta_i$ is independent of $u$. Since $u\sim \mu^{\otimes n}$, we have has 
\[
\E[C^{(i)}] = i.
\]
Moreover, by \Cref{lem:small_ball_proj_applied},
 $$
 \P\left(\left(C^{(t)}/t\right)^{-C_1}\geq s\right)\leq (C K_\infty\cdot  s^{-1/2C_1})^{t}
 $$
for some universal constant $C$, which we assume is sufficiently large that $CK_\infty\geq 2$. In particular, we have that
\[
    s \geq (CK_\infty)^{4C_1}\quad \Longrightarrow\quad (C K_\infty\cdot  s^{-1/2C_1})^{t}\leq s^{-t/4C_1}
\]
 and hence that
\begin{align*}
\E\left[(C^{(t)}/t)^{-C_1} {\bf 1}_{\{(C^{(t)} / t)^{-1}> (CK_\infty)^{4}\}}\right]& \leq \int_{(CK_\infty)^{4C_1}}^\infty \mathbb P\lr{(C^{(t)}/t)^{-C_1}>s}ds\\
&\leq \int_1^\infty s^{-t/4C_1}ds=\frac{4C_1}{t-4C_1} = O(1/t).    
\end{align*}
This confirms \eqref{eq:sq_det_small_goal-1}. Next, to verify \eqref{eq:sq_det_small_goal-2}, note that since $x^{-C_1}$ is a convex function, there exists a finite $D(K_\infty)$ such that for all $x\geq (CK_\infty)^{-4}$ it is bounded above by it's second order Taylor expansion around $x=1$:
$$
x^{-C_1}\leq 1 + C_1(1-x) + D(1-x)^2.
$$
The right hand side is always positive and hence
\begin{equation}\label{eq:convex-majorant}
    \E\left[\one_{C^{(t)}/t\geq (CK_\infty)^{-4}}(C^{(t)}/t)^{-C_1}\right]\leq \E\left[1 - C_1(1-C^{(t)}/t) + D(1-C^{(t)}/t)^2\right]=1+\frac{D}{t^2}\E\left[(C^{(t)}-t)^2\right].
\end{equation}
To deduce \eqref{eq:sq_det_small_goal-2} we must therefore show that the expression on the right is $1+O(1/t)$. For this observe, recall from \eqref{eq:C-iid-rep} that
\[
    C^{(t)} = \|\Theta_t^T u\|^2.
\]
Write
\[
\mathcal A(\mu):=\E\left[(C^{(t)})^2\right],
\]
where we emphasize the dependence on the distribution $\mu$. Note that $(C^{(t)})^2$ is a degree four polynomial in the $u_i$'s and that 
\[
\mathcal A(\mu) = \sum_{i=1}^n \sum_{s=1}^t \Theta_{si}^2 \E[u_i^4]  + \text{terms that depend only on the first two moments of }\mu.
\]
Since the first two moments of $\mu$ are the same as those of a standard Gaussian we therefore find
\[
\mathcal A(\mu) = (M_4 - 3)t + \mathcal A(\mathcal N(0,1)).
\]
Moreover, writing $\chi_t^2$ for a chi-squared distribution with $t$ degrees of freedom we find
\[
\mathcal A(\mathcal N(0,1)) = \E[\lr{\chi_t^2}^2] = t(t+2).
\]
Hence, 
\[
\mathcal A(\mu) = t^2 + (M_4-1)t
\]
and so 
\[
\E{(C^{(t)}-t)^2} = M_4t.
\]
When combined with \eqref{eq:convex-majorant} this yields 
Therefore $\E\left[(C^{(t)}-t)^2\right]\leq (M_4+5)t$, and 
$$\E\left[(C^{(t)}/t)^{-C_1}\right]\leq 1+\frac{1+DM_4}{t}\leq \exp\left\{\frac{1+DM_4}{t}\right\}$$
where $D = 100(CK_\infty)^{4C_1}$ suffices. This verifies \eqref{eq:sq_det_small_goal-2} and completes the proof.
\end{proof}
\paragraph{Completion of Proof of \Cref{prop:sup_eq_any}.}
Given the above tools, we are now in place to show \Cref{prop:sup_eq_any}, for which we only needed to establish \eqref{eqn: equiv format smallball}. To do this, note that via \Cref{lem: sub mult k norm}:
$$
\|X_{N, n}\|_{(k)}\leq \|W_1\|_{(k)}\|W_N\cdots W_3W_2\|_{(k)}
$$
and that there exists $\Theta_0, L \in\fr_{n, k}$ via the SVD of $W_NW_{N-1}\dots W_2$ such that 
$$
\Theta_0^T\cdot W_{N}\cdots W_3W_2 = \operatorname{diag}(\{s_i(W_{N}\cdots W_3W_2)\}_{i=1}^k)\cdot L^T
$$
and hence the determinant is
$$\|X_{N, n}U_0\|_{(k)}\geq_{\eqref{eq:wedge-norm-def}}|\det(\Theta_0^TX_{N, n}U_0)| = |\operatorname{det}\left(L^T W_1 U_0\right)|\cdot \|W_N\cdots W_2\|_{(k)}\geq |\operatorname{det}(L^T W_1 U_0)|\frac{\|X_{N, n}\|_{(k)}}{\|W_1\|_{(k)}}$$ 
Thus,
$$0\leq -\log\frac{\|X_{N, n}U_0\|_{(k)}}{\|X_{N, n}\|_{(k)}}\leq \log\frac{\|W_1\|_{(k)}}{\left|\operatorname{det}\left(L^T W_1 U_0\right)\right|}=\log\frac{\|M\|_{(k)}}{\left|\operatorname{det}\left(L^T MU_0\right)\right|}$$ 
where $M=\sqrt{n}\cdot W_1\sim\mu^{\otimes n\times n}$ is the un-normalized random matrix. To complete the derivation of \eqref{eqn: equiv format smallball}, it now suffices to show that there exists constants $c_1, c_2, c_3>0$ such that for $G\sim\operatorname{unif}(\fr_{n, k})$ (see \Cref{lem: random wedge product} for the exact definition):
$$
\P\left(\frac{\|M\|_{(k)}^2}{\|MG\|_{(k)}^2}\geq (en/k)^{ck} \right), \P\left(\frac{\|MG\|_{(k)}^2}{k!}\geq(en/k)^{ck}  \right), \P\left(\frac{k!}{|\det(L^TMU_0)|^2}\geq(en/k)^{ck}  \right)
$$
are all at most 
$$c_2(en/k)^{-c_1ck}$$
whenever $c>c_3$. We bound these probabilities separately below:
\begin{enumerate}
    \item We consider, for any full-rank $M$, with randomness over $G$:
    $$\P\left(\frac{\|M\|_{(k)}^2}{\|MG\|_{(k)}^2}\geq (en/k)^{ck}  \right) = \mathbb{P}\left(\frac{\|MG\|_{(k)}^2}{\|M\|_{(k)}^2} \leq \left(\frac{k}{en}\right)^{ck}\right) 
    $$
    This quantity is bounded directly by \Cref{lem: random wedge product}  which states that the above objective is at most:
    $$\mathbb{P}\left(\left(\frac{\|MG\|_{(k)}}{\|M\|_{(k)}}\right)^{\frac{1}{2 k}} \leq \varepsilon \sqrt{\frac{k}{n}}\right) \leq(c \varepsilon)^{\frac{k}{2}}$$
    for any $\eps$ and some universal constant $c$. This means that
    $$\P\left(\frac{\|M\|_{(k)}^2}{\|MG\|_{(k)}^2}\geq (en/k)^{ck} \right) \leq\left(\frac{en}{k}\right)^{c_0k-ck/4}$$
    for a universal $c_0$.
    \item We show that one has, for any $G$:
    $$\P\left(\frac{\|MG\|_{(k)}^2}{k!}\geq(en/k)^{ck}   \right)\leq  \left(\frac{en}{k}\right)^{k-ck}
    $$
    This is because by \eqref{eqn: exp norm}:
    $$\E\left[\frac{\|MG\|_{(k)}^2}{k!} \right]=\frac{n(n-1)\cdots(n-k+1)}{k!}\leq \frac{n^k}{k!}\leq (en/k)^k$$
    \item For $U_0$ being the truncated identity, by \Cref{lem:sq_det_small}, there exists $C_1, C_2$ such that:
    $$\P\left(\frac{k!}{|\det(L^TMU_0)|^2}\geq(en/k)^{ck} \right)\leq  (en/k)^{-C_1ck}(ek)^{C_2}\leq (en/k)^{C_2(k+1)-C_1ck}$$
    holds for any $L\in\fr_{n, k}$ directly, since $(en/k)^{k+1}\geq e^{k+1}>ek$.
\end{enumerate}
Combining these three points along with a union bound concludes our proof of \eqref{eqn: equiv format smallball} and \Cref{prop:sup_eq_any}.\hfill $\square$

\subsubsection{A different proof without restricting $U_0$}\label{sec: any U0}
Of perhaps separate interest, we show a similar result to \Cref{prop:sup_eq_any} without restricting on $U_0$ to be the truncated identity. Specifically, we show that:
\begin{proposition}
    Assuming \Cref{cond:main_mu}, there exist constants $c_1, c_2, c_3>0$ depending only on $K_\infty, M_4$, such that for any $n\geq k$ and any $U_0\in\fr_{n, k}$:
    \begin{equation}\label{eqn: small_ball_lya_any}
    \mathbb{P}\left(\frac{1}{n}\left|\sum_{j=1}^k \lambda_i-\frac{1}{N}\log \left\|X_{N, n}U_0\right\|_{(k)}\right|>s\right) \leq c_1 \exp \{-c_2 n N s/k\}.
    \end{equation}
    for all $ s>c_3\frac{k\log(en)}{nN}$. 
\end{proposition}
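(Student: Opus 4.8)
The plan is to run the argument of \Cref{sec: reduction proof} essentially verbatim, changing only the one ingredient that genuinely used $U_0=\I_{[k]}^T$ — namely \Cref{lem:sq_det_small} — and replacing it by a small-ball bound for $|\det(L^TMU_0)|$ valid for an \emph{arbitrary} pair of frames. Allowing a general $U_0$ will cost a factor $k$ in the exponent and turn the $\log(en/k)$ in the threshold into $\log(en)$; everything else is bookkeeping imported from \Cref{sec: reduction proof}.

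First I would carry out the reduction of the proof of \Cref{prop:sup_eq_any} without change. Submultiplicativity (\Cref{lem: sub mult k norm}) applied to the factor $W_1$, together with the SVD of $W_N\cdots W_2$, gives, with $M=\sqrt n\,W_1\sim\mu^{\otimes n\times n}$ and $L\in\fr_{n,k}$ the top-$k$ right singular frame of $W_N\cdots W_2$ (a fixed frame after conditioning on $W_2,\dots,W_N$, and independent of $M$),
$$0\ \le\ -\log\frac{\|X_{N,n}U_0\|_{(k)}}{\|X_{N,n}\|_{(k)}}\ \le\ \tfrac12\log\frac{\|M\|_{(k)}^2}{\|MG\|_{(k)}^2}+\tfrac12\log\frac{\|MG\|_{(k)}^2}{k!}+\tfrac12\log\frac{k!}{|\det(L^TMU_0)|^2},\qquad G\ \text{Haar on}\ \fr_{n,k}.$$
The first two summands are controlled exactly as in \Cref{sec: reduction proof} — via \Cref{lem: random wedge product} and \eqref{eqn: exp norm}, neither of which involves $U_0$ — and decay in the deviation level fast enough to be negligible in the regime dictated by the threshold below. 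By a union bound, the whole proposition thus reduces to the following uniform statement: for all $L,U_0\in\fr_{n,k}$,
$$\P\big(|\det(L^TMU_0)|\le\delta\big)\ \le\ C(K_\infty)\Big(\tfrac{en}{k}\Big)^{1/2}\delta^{1/k}\qquad\text{whenever}\quad \delta\ \le\ \Big(\sqrt{\tfrac{k}{en}}\,\big/\,C'(K_\infty)\Big)^{k}.$$

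The small-ball bound above is the main step, and I expect it to be the only substantive obstacle. The difficulty, compared with \Cref{lem:sq_det_small}, is that the rows of $T:=L^TMU_0$ are not sequentially independent — each is a $W_2$-measurable mixture of \emph{all} rows of $M$ — so the telescoping inverse-moment computation is unavailable. I would get around this by externalising the independence already present in $M$. Let $r_a:=U_0^T(M_{a,\cdot})^T\in\R^k$ be the projected rows of $M$; they are i.i.d. with density $\le(CK_\infty)^k$ by \Cref{lem:small_ball_proj_rv}, and $T=\sum_{a=1}^n\ell_a r_a^T$ with $\ell_a$ the rows of $L$. Since $\sum_a\ell_a\ell_a^T=L^TL=\I_k$, Cauchy--Binet gives a $k$-subset $S\subseteq[n]$ with $|\det L_S|^2\ge 1/\binom nk\ge(k/en)^k$; relabelling the rows of $M$ (which changes neither its law nor $T$) we may assume $S=[k]$. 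Conditioning on the tail rows $(r_a)_{a>k}$ and writing $T=B^TR_{[k]}+A$ with $B=L_{[k]}$ invertible, $R_{[k]}$ the matrix with rows $r_1^T,\dots,r_k^T$, and $A$ fixed, one gets $|\det T|=|\det B|\cdot|\det(R_{[k]}+C)|$ with $C=B^{-T}A$ fixed and $|\det B|\ge(k/en)^{k/2}$. Now the rows of $R_{[k]}+C$ \emph{are} independent, so the Gram--Schmidt identity $|\det(R_{[k]}+C)|^2=\prod_{i=1}^k\mathrm{dist}\big(r_i+C_{i,\cdot}^T,\,V_{i-1}\big)^2$ ($V_{i-1}$ the span of the first $i-1$ rows) combined with a union bound over $i$ — this is exactly where the factor $1/k$ in the exponent is lost — reduces the claim to bounding, for each $i$ and each fixed $(k{-}i{+}1)$-dimensional subspace of $\R^k$ with orthonormal basis $\Pi_i$, the probability that $\|\Pi_i^T(r_i+C_{i,\cdot}^T)\|=\|(U_0\Pi_i)^T(M_{i,\cdot})^T+w_i\|$ is small. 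Since $U_0\Pi_i\in\fr_{n,k-i+1}$ is \emph{again a frame}, $M_{i,\cdot}$ has independent coordinates of density $\le K_\infty$ and is independent of the rows conditioned on, and the fixed shift $w_i$ is harmless, \Cref{lem:small_ball_proj_rv}/\Cref{lem:small_ball_proj_applied} give a per-$i$ bound $(C''K_\infty\delta^{1/k})^{k-i+1}$; summing the geometric series and dividing through by $|\det B|\ge(k/en)^{k/2}$ gives the displayed estimate. The combinatorial factor $\binom nk\le(en/k)^k$ is precisely what forces $\log(en)$ in place of $\log(en/k)$ in the threshold.

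Finally, I would plug $\delta=\sqrt{k!}\,e^{-u/3}$ — the value relevant to $\P\big(k!/|\det(L^TMU_0)|^2>e^{2u/3}\big)$ — into the small-ball bound, use $(k!)^{1/(2k)}\le\sqrt k\le\sqrt n$ to absorb its prefactor, and combine with the two fast-decaying terms. This yields $\P\big(-\log\tfrac{\|X_{N,n}U_0\|_{(k)}}{\|X_{N,n}\|_{(k)}}>u\big)\le c_1 e^{-c_2 u/k}$ as soon as $u\ge c_3 k\log(en)$, with $c_2$ absolute and $c_1,c_3$ depending only on $K_\infty$. Since the left-hand quantity equals $nN\cdot\tfrac1n\big|\sum_i\lambda_i-\tfrac1N\log\|X_{N,n}U_0\|_{(k)}\big|$, the substitution $u=nNs$ is exactly \eqref{eqn: small_ball_lya_any}.
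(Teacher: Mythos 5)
Your proposal is correct, and it follows the paper's overall architecture exactly (the reduction via \Cref{lem: sub mult k norm}, the SVD of $W_N\cdots W_2$ producing the frame $L$ independent of $W_1$, and the three-factor union bound with the auxiliary Haar frame $G$, with the first two factors handled by \Cref{lem: random wedge product} and \eqref{eqn: exp norm} just as in \Cref{sec: reduction proof}); indeed the paper itself observes that only the small-ball estimate for $|\det(L^TMU_0)|$ with two arbitrary frames needs to change. Where you genuinely diverge is in the proof of that estimate. The paper's \Cref{lem:small_ball_one_mat} diagonalizes \emph{both} frames by column eliminations and row exchanges, paying a determinant factor of at most $n^{k/2}$ per frame, which reduces matters to $M_{[k],[k]}+X$ with $X$ independent of the top i.i.d.\ $k\times k$ block, and then bounds $|\det(M_{[k],[k]}+X)|$ from below through the least-singular-value/row-distance argument of \Cref{lem: det X+M} with a union bound over the $k$ rows. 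You instead exploit the asymmetry of the two frames: $U_0$ is absorbed once and for all into the $(CK_\infty)^d$ density bound for the projected rows $r_a=U_0^TM_{a,\cdot}^T$ via \Cref{lem:small_ball_proj_rv} (the fixed shift is indeed harmless since the density bound is uniform over translates, even though \Cref{lem:small_ball_proj_applied} is stated without a shift), while $L$ is handled by Cauchy--Binet, extracting a $k\times k$ submatrix with $|\det L_S|\geq \binom{n}{k}^{-1/2}$, after which the residual $k\times k$ matrix has independent rows and the base-times-height identity plus a row-wise union bound (conditioning on the earlier rows so that the orthocomplement frame is fixed) gives the claimed bound. Both routes lose the same factor of $k$ in the exponent through the row-wise union bound, and both convert $\log(en/k)$ into $\log(en)$ through the frame-conditioning cost; your cost $\binom{n}{k}^{1/2}\le (en/k)^{k/2}$ is slightly smaller than the paper's $n^{k}$ from diagonalizing two frames, so your prefactor is marginally sharper, though the final statement is identical. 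What the paper's route buys in exchange is the explicit shifted-least-singular-value estimate \Cref{lem: det X+M}, which it advertises as being of independent interest; your route avoids both that lemma and the frame-diagonalization algorithm.
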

Following the exact same recipe for the proof in \Cref{prop:sup_eq_any}, the only distinction lies in the following lemma, which we find interesting in its own rights.
\begin{lemma}\label{lem:small_ball_one_mat}
    For any fixed frames $U, V\in\fr_{n, k}$ and $M\in\mu^{\otimes n\times n}$ where $\mu$ satisfies \Cref{cond:main_mu}, one has that for all $t>0$:
$$\mathbb{P}\left(\left|\operatorname{det}\left(U^T M V\right)\right|^{1 / k} \leq n^{-(t+1)} k^{-1/2}\right) \leq 2 \sqrt{2} K_\infty n^{-t} k.$$
\end{lemma}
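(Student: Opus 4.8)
The plan is to reduce the bound on $|\det(U^\top M V)|$ to $k$ one‑dimensional small‑ball estimates via a Gram–Schmidt expansion, mirroring the proof of \Cref{lem:sq_det_small} but now peeling off \emph{columns} (so that the ``fresh'' randomness at step $j$ comes from the single vector $Mv_j$, which has independent coordinates). Write $v_1,\dots,v_k$ for the columns of $V$, set $T:=U^\top M V\in\R^{k\times k}$, and let $T^{(j)}:=U^\top M v_j$ be its $j$‑th column. By the Gram identity $|\det T| = \prod_{j=1}^k d_j$ with $d_j := \operatorname{dist}\!\big(T^{(j)},\operatorname{span}(T^{(1)},\dots,T^{(j-1)})\big)$, hence $|\det T|^{1/k} = \big(\prod_j d_j\big)^{1/k}\ge \min_j d_j$. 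Thus $\{\,|\det(U^\top M V)|^{1/k}\le \varepsilon\,\}\subseteq \bigcup_{j=1}^k\{d_j\le\varepsilon\}$, and a union bound reduces the claim, with $\varepsilon = n^{-(t+1)}k^{-1/2}$, to proving $\P(d_j\le\varepsilon)\le 2\sqrt2\,K_\infty\,n^{-t}$ for each fixed $j$.

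Fix $j$ and condition on $Mv_1,\dots,Mv_{j-1}$; this is exactly the information determining $T^{(1)},\dots,T^{(j-1)}$, hence the subspace $W_{j-1}:=\operatorname{span}(T^{(i)})_{i<j}\subseteq\R^k$, which has codimension $\ge k-j+1\ge 1$. Fix, measurably in this conditioning, a unit vector $\omega_j\in W_{j-1}^\perp$. Since the projection onto $\operatorname{span}(\omega_j)$ only shrinks norms and, tested against $\omega_j$, agrees with the projection of $T^{(j)}$ onto $W_{j-1}^\perp$, we get
$d_j\ge |\langle\omega_j,T^{(j)}\rangle| = |\langle U\omega_j, Mv_j\rangle| = \big|\sum_{a=1}^n (U\omega_j)_a\,\langle M_a,v_j\rangle\big|$,
where $U\omega_j\in\R^n$ is a unit vector and $M_a$ denotes the $a$‑th row of $M$. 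Two structural facts are used here: the rows $M_a$ are independent, so the summands stay independent after conditioning; and $v_j\perp v_1,\dots,v_{j-1}$ because $V$ is a frame, so the conditioning only constrains the $\operatorname{span}(v_1,\dots,v_{j-1})$‑component of each row while $\langle M_a,v_j\rangle$ depends only on the complementary component.

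To finish I would pick the largest coordinate $a^\ast$ of $U\omega_j$, so $|(U\omega_j)_{a^\ast}|\ge n^{-1/2}$; conditioning in addition on $\langle M_a,v_j\rangle$ for $a\ne a^\ast$ makes $\sum_a (U\omega_j)_a\langle M_a,v_j\rangle$ an affine function of the single variable $\langle M_{a^\ast},v_j\rangle$, giving $\P(d_j\le\varepsilon\mid\cdots)\le 2\,n^{1/2}\varepsilon\cdot\rho_{a^\ast}$ with $\rho_{a^\ast}$ a bound on the conditional density of $\langle M_{a^\ast},v_j\rangle$. \textbf{The delicate step is controlling $\rho_{a^\ast}$:} after conditioning on $Mv_1,\dots,Mv_{j-1}$ the law of $\langle M_{a^\ast},v_j\rangle$ is no longer the unconditional one, and for general (non‑Gaussian) $\mu$ conditioning on correlated linear functionals of the same row can inflate its density. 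I would resolve this via the $j$‑frame $[v_1\,\cdots\,v_j]$: the vector $[v_1\,\cdots\,v_j]^\top M_{a^\ast}$ is the image of the i.i.d.\ row $M_{a^\ast}$ under an orthonormal system, so by \Cref{lem:small_ball_proj_rv} it has joint density $\le (C K_\infty)^j$ on $\R^j$, and combining this with the unconditional marginal bound $\sqrt2\,K_\infty$ controls $\rho_{a^\ast}$ up to factors polynomial in $n,k$, absorbed by the slack between $\varepsilon=n^{-(t+1)}k^{-1/2}$ and the target $n^{-t}k$; for $j=1$ no conditioning is needed ($W_0=\{0\}$, $\omega_1$ is a fixed basis vector, and $U\omega_1v_1^\top$ is a fixed unit‑Frobenius‑norm direction), which is exactly where the clean constant $2\sqrt2\,K_\infty$ of \Cref{lem:small_ball_proj_rv} originates. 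The Gram–Schmidt reduction, the union bound, and the isolation of the fresh coordinate are all routine; the conditional one‑dimensional small‑ball estimate is the only real work. A variant that avoids the conditioning altogether is to observe that $\det(U^\top M V)$ is, for each fixed row index $a$, an affine function of $M_a$ — by the matrix‑determinant/adjugate identity applied to $T=\sum_a(U^\top e_a)(V^\top M_a)^\top$ — with coefficient vector $V\operatorname{adj}(T_a)u^{(a)}$ that is independent of $M_a$; this gives the clean one‑dimensional estimate directly but transfers the burden to a lower bound on the norm of that adjugate coefficient vector.
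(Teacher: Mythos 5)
Your reduction (Gram--Schmidt on the columns of $T=U^\top MV$, geometric mean $\ge$ minimum, union bound over $j$) is fine, but the step you yourself flag as delicate is a genuine gap, and the fix you propose does not close it. After conditioning on $Mv_1,\dots,Mv_{j-1}$ you need a bound on the \emph{conditional} density of $\langle M_{a^\ast},v_j\rangle$ given the correlated linear functionals $\langle M_{a^\ast},v_i\rangle$, $i<j$, of the \emph{same} row. The claim that orthogonality of $v_j$ to $v_1,\dots,v_{j-1}$ makes $\langle M_a,v_j\rangle$ depend ``only on the complementary component'' is a Gaussian fact; for a general $\mu$ satisfying \Cref{cond:main_mu} orthogonal linear functionals of an i.i.d.\ row are not independent. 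Your proposed repair via \Cref{lem:small_ball_proj_rv} does not work either: that lemma bounds the \emph{joint} density of $[v_1\cdots v_j]^\top M_{a^\ast}$ from above, whereas the conditional density is this joint density divided by the marginal density of the conditioning variables, and the marginal admits no lower bound -- on an event of small but positive probability the conditional density can be arbitrarily large, and no polynomial-in-$(n,k)$ slack absorbs that. (Your closing ``adjugate'' variant has the same status: it trades the problem for a lower bound on $\|V\operatorname{adj}(T_a)u^{(a)}\|$, i.e.\ on a cofactor determinant, which is essentially the original difficulty.)

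The paper avoids conditioning on linear functionals of the row that carries the fresh randomness. It first applies determinant-preserving column operations and law-preserving row/column exchanges to replace $U,V$ by frames whose top $k\times k$ block is the identity, at the cost of a deterministic factor $n^{-k}$ on the determinant (this is exactly the source of the $n^{-(t+1)}$ versus $n^{-t}$ in the statement). In that normal form $A^\top MB=M_{[k],[k]}+X$ with $X$ measurable with respect to the entries outside the top-left block, hence independent of $M_{[k],[k]}$, and the problem becomes a smallest-singular-value bound for a fixed shift of an i.i.d.\ $k\times k$ matrix (\Cref{lem: det X+M}): there each row's distance to the span of the others is an affine function of a fresh, fully independent row of $M$, so the unconditional density bound $\sqrt2 K_\infty$ from \Cref{lem:small_ball_proj_rv} applies directly. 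If you want to salvage a column- or row-peeling argument in the spirit of \Cref{lem:sq_det_small}, note that the latter works only because there the matrix is $MU$ with $M$ having i.i.d.\ rows, so successive rows really are independent; for $U^\top MV$ with two general frames neither the rows nor the columns enjoy such independence, and some reduction of the paper's type (or a genuinely new conditional small-ball input) is needed.
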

\begin{proof}
First, we look at what linear transformations we can do to $U, V$ while preserving $\det(U^TMV)$:\begin{itemize}
    \item Adding a constant multiple of one column to another column. This, by definition, does not change the determinant.
    \item Row exchanges. This changes the determinant but preserves the \emph{law} of $\det(U^TMV)$ as the law of $M$ is invariant with respect to row and column exchanges.
\end{itemize} 
Note that for any frame, these two operations allow us to turn them into the form:
$$
U^T, V^T\to \widetilde U^T, \widetilde V^T = \begin{bmatrix}
a_1 & 0 & 0 & \cdots & 0 & \cdots\\
0 & a_2 & 0 & \cdots & 0 & \cdots\\
\vdots & \vdots & \vdots & \ddots & \vdots \\
0 & 0 & 0 & \cdots & a_k & \cdots
\end{bmatrix}, 
\begin{bmatrix}
b_1 & 0 & 0 & \cdots & 0 & \cdots\\
0 & b_2 & 0 & \cdots & 0 & \cdots\\
\vdots & \vdots & \vdots & \ddots & \vdots \\
0 & 0 & 0 & \cdots & b_k & \cdots
\end{bmatrix}
$$
where $|a_i|, |b_j|\geq\frac{1}{\sqrt{n}}$, by following the algorithm below.
\begin{mdframed}[innerrightmargin=15pt]
 \noindent\textbf{Algorithm: Diagonalizing frame $U\in\fr_{n, k}$ column by column.}
 
    \vspace{-5pt}
    \begin{enumerate}
        \item Initialize $U = U^{(0)}$. For $t=1,2,\dots, k$, do the following to get $U^{(t)}$ from $U^{(t-1)}$:  
        
        (1) row-exchanging the argmax (of absolute value) entry row of the $t$-th column $(U^{(t-1)})^T_t$ to the $t$-th row; 
        
        (2) use column elimination (adding appropriate scalar multiples of the $t$-th column) to make the $t$-th row all zero except at the $t$-th column.
        \item Output $\tilde U = U^{(k)}$.
    \end{enumerate}
\end{mdframed}

To analyze this procedure, note that under time $t$, the norm of the $t$-th column is at least 1 because (ignoring row exchanges which are irrelevant) it has only been added a linear combination of the first $(t-1)$ columns which are all orthogonal to itself. Hence, the arg-max absolute value is at least $1/\sqrt{n}$ at time $t$. Hence, the result $U^{(k)}$ must have the top $k\times k$ submatrix diagonalized with diagonal entries at least $1/\sqrt{n}$.

Note that we can take any $k\times k$ matrices that have bounded determinant and left (right) multiply to our product. Let two $k\times k$ matrices be
$$L, R = \begin{bmatrix}
a_1^{-1} & 0 & \cdots & 0\\
0 & a_2^{-1}& \cdots & 0 \\
\vdots & \vdots & \ddots &\vdots  \\
0 & 0 & \cdots & a_k^{-1} 
\end{bmatrix}, 
\begin{bmatrix}
b_1^{-1} & 0  & \cdots & 0\\
0 & b_2^{-1}  & \cdots & 0 \\
\vdots &  \vdots & \ddots &\vdots  \\
0 & 0  & \cdots & b_k^{-1} 
\end{bmatrix}, $$
then both $A= \widetilde  U L$ and $ B= \widetilde VR$ share the form of $ \begin{bmatrix}
1 & 0 &\cdots & 0 & \cdots\\
0 & 1 & \cdots & 0 & \cdots\\
\vdots  & \vdots & \ddots & \vdots \\
0 & 0  & \cdots & 1 & \cdots
\end{bmatrix}.$ Note that since $$|\det(L)|, |\det(R)|\leq n^{k/2}$$ we only need to study the small ball probability for 
\begin{equation}\label{eqn: det transformation}
    |\det(U^TMV)|=_d|\det(\tilde U^TM\tilde V)|=
|\det(A^TMB)| \cdot\left( |\det(L)| |\det(R)|\right)^{-1}\geq n^{-k}|\det(A^TMB)|.
\end{equation}

To analyze this determinant, we need the following result. 
\begin{lemma}\label{lem: det X+M}
    Under \Cref{cond:main_mu}, let $M\sim \mu^{\otimes k\times k}$. Fix any $X\in\R^{k\times k}$, one has:
    $$\P\left(|\det(X+M)|<\left(n^{-t}\sqrt{k^{-1}}\right)^k\right)\leq 2\sqrt{2}K_\infty n^{-t}k$$
\end{lemma}
\begin{proof}
    Note this simple observation (Lemma 5.1, \cite{tao2010random}): let $N$ be any matrix and then the $i$-th row of $J\triangleq (N^{-1})^T$ satisfies:
$$\|J_i\|^{-1}=\|(N^{-1})_i\|^{-1} = \min_{c_j\in\R}\| N_i-\sum_{j\neq i} c_jN_j\|$$and this is simply because $\langle J_i, N_i-\sum_{j\neq i} c_jN_j\rangle = 1$ always for any $\{c\}$. Hence, $$\sigma_{\min}(N)=\sigma^{-1}_{\max}(J)\geq \|J\|_F^{-1} \geq \sqrt{k^{-1}}\min_i \min_{c_j\in\R^{k-1}, j\neq i}\left\| N_i-\sum_{j\neq i} c_jN_j\right\|.$$
To use a union bound on the $k$ rows of $N = X+M$, we only need to show that for any fixed $i$,
$$\P\left(\min_{c_{-i}\in\R^{k-1}}\left\| N_i-\sum_{j\neq i} c_jN_j\right\|\leq n^{-t}\right)\leq 2\sqrt{2}Kn^{-t}.$$
This is because, fix $M_{-i}$ and only consider $M_i\sim\mu^{\otimes k}$. Let the unit vector of null space $\{N_{-i}\}^{\perp}$ (which is independent with $M_i$) be $w_i$ then:
$$\min_{c_{-i}\in\R^{k-1}}\left\| N_i-\sum_{j\neq i} c_jN_j\right\| = |\langle M_i, w_i\rangle+\langle X_i, w_i\rangle|$$ where $X_i, w_i$ are $\sigma(M_{-i})$-measurable. The probability that this is small is directly concluded \Cref{lem:small_ball_proj_rv}.
\end{proof}
To complete the proof, let us write the product $A^TMB$ as a linear combination of $$A^TMB = \sum_{1\leq i, j\leq n}M_{ij}A_i^TB_j$$ where $A_i, B_j\in\R^{1\times k}$.
Note that $A_i^TB_j = E^{(i,j)}$ for $1\leq i, j\leq k$ where $E^{(i,j)}$ denotes the rank-1 matrix with only the $(i, j)$th entry being 1 and else 0. Hence, conditioned on the irrelevant entries $\sigma(\{M_{i j}: i\lor j\geq k\})$ (treating them as constant) we get
$$A^TMB \triangleq M_{[k], [k]}+X,\quad\quad X\perp\!\!\!\perp M_{[k], [k]}$$
applying \Cref{lem: det X+M} and
combining with \eqref{eqn: det transformation} we are done.
\end{proof}
\subsection{Remaining proofs in Step 3: Derivation of \Cref{prop: ptwise any}}\label{sec: ptwise proof}
Our goal in this section is to derive \Cref{prop: ptwise any}. As mentioned in  \Cref{sec: proof short} the main idea is to express the norm $\log\|X_{N, n}U_0\|_{(k)}$ as an average.
To do this, we repeatedly use the SVD yields to obtain an alternative representation of $\|X_{N, n}U\|_{(k)}$ as follows. First, let $U_1 = U_0$ and $U_{t+1}$ be defined via (for $t=1,2,\dots, N-1$) the singular value decomposition of $W_tU_t$ as: $$W_tU_t = U_{t+1}\operatorname{diag}(\{s_i(W_tU_t)\}_{i=1}^k)O_{(t)}$$
for $O_{(t)}\in\fr_{k, k}$ and $U_{t+1}\in\fr_{n, k}$. Then $\|X_{N, n}U_0\|_{(k)}$ can be also written as (recall \eqref{eq:wedge-norm-def}):
\begin{align*}
    \|X_{N, n}U_0\|_{(k)}
    &=\sup_{V}\det(V^TX_{N, n}U_0) \notag\\
    &= \sup_{V}\det(V^TW_{N}U_{N})\cdot\prod_{i=1}^{N-1}\det\left[\operatorname{diag}(\{s_i(W_iU_i)\}_{i=1}^k)\right]\notag
    \\&=\prod_{i=1}^N\|W_iU_i\|_{(k)}.
\end{align*}
Since $U_t\in\sigma(W_1^{t-1})$ is independent with $W_t$ for all $t$, we only need to study the objective in \eqref{eqn: point norm prod decomp}:
\begin{equation*}
\frac{1}{N}\log \|X_{N, n}U\|_{(k)}^2 = \frac{1}{N}\sum_{i=1}^N \log \|W_iU_i\|_{(k)}^2.
\end{equation*}

Let us define the $n\times k$ product matrix $T^{(i)} = \sqrt{n}W_iU_i$ where each rows are independent (conditioned on $U_i$) from the following law
$$ T_j\sim_{i.i.d.} U_i^Tw\mid_{w\sim\mu^{\otimes n}}$$
with $\E[(T_j)_i] = 0, \E[(T_j)_i^2] = 1, \E[((T_j)_{i_1}(T_j)_{i_2}] = 0$ for all $i, j, i_1\neq i_2$. Thus, for any $k\times k$ submatrix $T_I$ (indexed by a row subset $I\in{\binom{[n]}{k}}$ of size $k$), the expected determinant squared (by independence of rows) is exactly
$$\E[|\det(T_I)|^2] = \sum_{\substack{i_1, i_2,\dots, i_k\\ j_1, j_2, \dots, j_k}}\prod_{s=1}^k\E\left[ T_{I_si_s}T_{I_sj_s}\right] = \sum_{\substack{i_1, i_2,\dots, i_k\\ j_1, j_2, \dots, j_k}}\prod_{s=1}^k\one\left[ i_s = j_s\right] = k!.$$
This gives
\begin{equation*}
    \E\left[\|W_iU_i\|_{(k)}^2\right] =\E[\det\left((W_iU_i)^TW_iU_i\right)]= \frac{1}{n^k}\sum_{I\in {\binom{[n]}{k}}} \E|\det(T_I)|^2 = \prod_{j=1}^k\frac{n-j+1}{n}.
\end{equation*}
To complete the proof of \Cref{prop: ptwise any} note that 
\begin{align}
    &\P\left(\frac{1}{n}\left|\log\|W_iU_i\|_{(k)}^2-\sum_{j=1}^k\log \frac{n-j+1}{n}\right|>s\right)
    \leq \exp\{-sn\} + \P\left(\frac{\|W_iU_i\|_{(k)}^2}{\frac{n!}{(n-k)!\cdot n^k}}<\exp\{-sn\}\right), \label{eqn: up and low}
\end{align}
where the upper tail $\P\left(\frac{\|W_iU_i\|_{(k)}^2}{\frac{n!}{(n-k)!\cdot n^k}}>\exp\{-sn\}\right)\leq e^{-sn}$ follows from Markov's inequality. Observe that\Cref{lem:sq_det_small} gives the following inverse moment bound:
$$\mathbb{E}\left[\left(\left|\operatorname{det}\left(T_I\right)\right|^2 / k!\right)^{-C_1}\right] \leq e^{C_2(1+\log k)}$$
for some constant $C_1, C_2$ depending only on $M_4, K_\infty$.
Therefore, one can apply Jensen's inequality on $f(x)=x^{-C_1}$ to get:
\begin{align}
    \E\left[\left(\|WU\|_{(k)}^2/\frac{n!}{(n-k)!\cdot n^k}\right)^{-C_1}\right] 
    &= \E\left[\left(\frac{1}{{\binom{n}{ k}}}\sum_{|I|=k} |\det(T_{I})|^2/k!\right)^{-C_1}\right]\notag
    \\
    _{\text{Jensen's inequality}}&\leq\E\left[\frac{\sum_{|I|=k}(|\det(T_{I})|^2/k!)^{-C_1} }{{\binom{n}{ k}}}\right]\leq (ek)^{C_2}.\notag
\end{align}
This gives the following bound on the lower tail:
\begin{equation}\label{eqn: small ball nk mat}
    \P\left(\|WU\|_{(k)}^2/\frac{n!}{(n-k)!\cdot n^k}<e^{-sn}\right)\leq\exp\{-C_1sn+ C_2\log(ek))\}.
\end{equation}
To conclude the proof of \Cref{prop: ptwise any}, we combine \eqref{eqn: point norm prod decomp}, \eqref{eqn: small ball nk mat} and \eqref{eqn: mtg large off} in \Cref{lem: mtg concentrate} with 
$$Z_i = \frac{1}{n}\left(\log\|W_iU_i\|_{(k)}^2-\sum_{i=1}^k\log\frac{n-i+1}{n}\right),$$ 
to see that the conditions for \Cref{lem: mtg concentrate} hold with $c_1\in \Theta(1)$ and $c_2\in \Theta(\frac{\log (ek)}{n})$. We get as a result that for some constants $c_0, c_3$ depending only on $K_\infty, M_4$ and for all $s\geq c_3 n^{-1}\log (ek)$:
$$
\P\left(\frac{1}{nN}\left|\log \left\|X_{N, n} U\right\|_{(k)}^2-N\sum_{j=1}^k\log\frac{n-j+1}{n}\right|>s\right)\leq 2\exp\{-c_0 nNs/\log(ek)\}.
$$
This concludes the proof of \eqref{eqn: concentrate single matt}. \hfill $\square$

\subsection{Completion of Proof of \Cref{thm:sing_val_approx_unif}}\label{sec: main proofs end}
We are now in a position to complete the proof of \Cref{thm:sing_val_approx_unif}. For this, note that \eqref{thm: main technical concentration} follows immediately when combining \eqref{eqn: small_ball_lya}, \eqref{eqn: concentrate single matt}, and a union bound. Given this, we can check \Cref{thm:sing_val_approx_unif} via a union bound as follows. 

By twisting constants (multiplying by universal constants) in \eqref{eqn: thm 1 main}, we may assume that $\eps > 5/n$ and that $\eps<0.01$. Note that under the given conditions of $N\geq c_1\eps^{-2}, n\geq c_2\eps^{-2}\log\eps^{-1}$, one has that (via \eqref{thm: main technical concentration}):
    $$
    \mathbb{P}\left(\left|\frac{1}{n} \sum_{i=m}^k\left(\lambda_i-\frac{1}{2} \log \frac{n-i+1}{n}\right)\right| \geq \eps^2\right) \leq C_2 \exp \left\{-C_3 n N \eps^2 / \log(ek)\right\}\leq n^{-4}
    $$
    if we pick large enough constants $c_1, c_2$. As a result, a union bound may be applied such that
    $$
    \mathbb{P}\left(\exists m<k, \quad \left|\frac{1}{n} \sum_{i=m}^k\left(\lambda_i-\frac{1}{2} \log \frac{n-i+1}{n}\right)\right| \geq \eps^2\right) \leq C_2 \exp \left\{-\tilde C_3 nN \eps^2/\log n \right\}.
    $$ for some $\tilde C_3>0$.
    In fact, we will show that so long as:
    \begin{equation}\label{eqn: unif condition}
        \left|\frac{1}{n} \sum_{i=m}^k\left(\lambda_i-\frac{1}{2} \log \frac{n-i+1}{n}\right)\right| \leq \eps^2
    \end{equation}
    holds for all $1\leq m < k\leq n$, one has that for all $1\geq t\geq 0$:
    \begin{equation}\label{eqn: sup distance}
        |F(t)| \triangleq \left|\frac{1}{n} \#\left\{1 \leq i \leq n \mid s_i^{2 / N}\left(X_{N, n}\right) \leq t\right\}-t\right| \leq 5\varepsilon
    \end{equation}
    which, if true, concludes the proof of \Cref{thm:sing_val_approx_unif} (by, again, twisting constants). First let us check two basic inequalities following from \Cref{lem: partial sums log calculus} directly.
    In particular, it follows that for any $n\geq q\geq m\geq 5$ one has:
      \begin{equation}\label{eqn: lemma 12.1 upp}
          m\log (q / n)- \sum_{j=n-q+1}^{n-q+m} \log\frac{n-j+1}{n} = \sum_{j=1}^{m-1}\log\frac{1}{1-j/q} \geq \frac{(m-1)^2}{2  q}.
      \end{equation}
    Furthermore for any $n\geq q\geq m\geq 5, n-q-m \geq 0$, we also have
    \begin{equation}\label{eqn: lemma 12.1 low}
        m \log (q / n)- \sum_{j=n-q-m+1}^{n-q} \log\frac{n-j+1}{n} = \sum_{j=1}^m \log \left(\frac{1}{1+j / q}\right)\leq-\frac{2m^2}{3 q}.
    \end{equation}
    To show \eqref{eqn: sup distance}, it suffices to check that for $t=1$ and $t=s/n$ where $1\leq s\leq n-1$ is an integer. The reason is that $\frac{1}{n} \#\left\{1 \leq i \leq n \mid s_i^{2 / N}\left(X_{N, n}\right) \leq t\right\}$ is non-decreasing. Hence if $nt\in(s, s+1)$ where $0\leq s\leq n-1$ then
    $$|F(t)|\leq \frac{1}{n} + |F(s/n)|\lor |F((s+1)/n)|\leq \eps + |F(s/n)|\lor |F((s+1)/n)|$$
    and $0\geq F(t)\geq F(1)$ if $t\geq 1$. The case for $t\leq 0$ is trivial. Suppose $\alpha\leq n\eps <  \alpha+1$ where $\alpha\geq 5$ is an integer. We will show that $|F(t)|\leq 4\eps$ for $t=s/n, s=1,2,\dots, n$.

    \paragraph{The case of $t=1$}
    Suppose $\lambda_1\geq \lambda_2\dots\geq \lambda_r \geq 0>\lambda_{r+1}$. Then $|F(1)| = r/n$ and \eqref{eqn: unif condition} reads:
    $$
    n\eps^2\geq \sum_{i=1}^r\left(\lambda_i-\frac{1}{2} \log \frac{n-i+1}{n}\right) \geq -\frac{1}{2}\sum_{i=1}^r\log\frac{n-i+1}{n}.
    $$
    If $r\leq 5\leq 3\alpha$ then \eqref{eqn: sup distance} is already closed. Otherwise, by \eqref{eqn: lemma 12.1 upp} with $q=n, m=r$ one gets
    $$
    n\eps^2\geq-\frac{1}{2}\sum_{i=1}^r\log\frac{n-i+1}{n}\geq \frac{(r-1)^2}{2n} > \frac{r^2}{4n}.
    $$
    which implies that $r/n<2\eps$.
    \paragraph{The case of $t=s/n$, where $s\in [1, n-1]$ is an integer}
    
    Suppose $$\lambda_1\geq \lambda_2\dots\geq \lambda_r > \frac{1}{2}\log(s/n)\geq \lambda_{r+1}\geq \dots\geq \lambda_n.$$ Then $|F(t)| = |r+s-n|/n$. Again, if $|r+s-n|\leq 5$, then we are already done. 
    \begin{itemize}
        \item If $r+s-n\geq 5$, then our condition \eqref{eqn: unif condition}, combined with \eqref{eqn: lemma 12.1 upp} with $q=s, m = r+s-n$ reads:
        $$
    2n\eps^2\geq \sum_{i=n-s+1}^{r}\left(2\lambda_i- \log \frac{n-i+1}{n}\right) \geq m\log(s/n)-\sum_{j=n-s+1}^{r} \log\frac{n-j+1}{n}\geq \frac{(m-1)^2}{2s}
        $$
        or
    $(n|F(t)|-1)^2\leq 4ns\eps^2\leq 4n^2\eps^2$. This implies that $|F(t)|\leq 1/n+2\eps<3\eps$.
    \item If $n-s-r\geq 5$. If $n-r<\alpha$ then we are already done. Assume otherwise, if $n-r\leq 2s$, then \eqref{eqn: lemma 12.1 low} with $q=s, m = n-r-s$ reads
    $$
    2n\eps^2\geq \sum_{i=r+1}^{n-s}\left(-2\lambda_i + \log \frac{n-i+1}{n}\right) \geq -m\log(s/n)+\sum_{j=r+1}^{n-s} \log\frac{n-j+1}{n}\geq \frac{2m^2}{3s}
    $$
    which implies, as before, $|F(t)|\leq 2\eps<3\eps.$ If $n-r>2s$, then \eqref{eqn: lemma 12.1 low} with $q=s=m$ reads
    $$
    2n\eps^2\geq \sum_{i=n-2s+1}^{n-s}\left(-2\lambda_i + \log \frac{n-i+1}{n}\right) \geq -s\log(s/n)+\sum_{j=n-2s +1}^{n-s} \log\frac{n-j+1}{n}\geq \frac{2s}{3}
    $$
    which implies $s\leq 3n\eps^2<\alpha$ and $t<\eps$. In this case, note that:
    $$|F(t)| = \frac{n-r}{n}-t\leq \left|F\left(\frac{\alpha+1}{n}\right)\right|+\frac{\alpha+1-s}{n}\leq 3\eps+\frac{\alpha}{n}$$
    from our previous discussion. This concludes our claim.
    \end{itemize}
 Our proof is concluded. \hfill $\square$

\bibliography{ref} 

\end{document}